\newtheorem{theo}{Theorem}[section]
\newtheorem{prop}[theo]{Proposition}
\newtheorem{coro}[theo]{Corollary}
\theoremstyle{definition}
\numberwithin{equation}{section}
\newcommand{\R}{{\mathbb R}}
\newcommand{\Z}{{\mathbb Z}}
\newcommand{\var}{{\vartheta}}
\begin{document}

\title{Spectral measures with arbitrary Hausdorff dimensions}

\author{Xin-Rong Dai}
\address{School of Mathematics and Computational Science, Sun Yat-sen University, Guangzhou, 510275,   China}
\email{daixr@mail.sysu.edu.cn}

\author{Qiyu Sun}
\address{Department of Mathematics, University of Central Florida, Orlando, FL 32816, USA}
\email{qiyu.sun@ucf.edu}

\thanks{The research  is partially  supported by the National Science Foundation of China (No. 10871180 and 11371383),
 and the National Science Foundation (DMS-1109063). }

\subjclass[2010]{28A80, 42C05, 42C40.}
\keywords{spectral measure, homogenous Cantor set, Hausdorff dimension,   maximal orthogonal set, maximal tree mapping,
Bernoulli convolution, Beurling dimension, wavelet}

\date{\today}
\maketitle

\begin{abstract}
In this paper, we consider spectral properties of Riesz product measures supported on homogeneous Cantor sets
 and we show the existence of spectral measures with arbitrary Hausdorff dimensions, including non-atomic zero-dimensional spectral measures
 and one-dimensional singular  spectral measures.
\end{abstract}


\bigskip

\section{\bf Introduction}
\setcounter{equation}{0}

Given sequences  ${\mathcal B}:=\{b_n\}_{n=1}^\infty$
and ${\mathcal D}:=\{d_n\}_{n=1}^\infty$  of positive integers that satisfy
\begin{equation}\label{bndn.eq}
1<d_n<b_n,\ \ n=1, 2, \cdots,
\end{equation}
we let
\begin{equation}
\label{rhon.def}
\rho_1:=1 \quad {\rm and} \quad \rho_n:=\prod_{j=1}^{n-1} b_j \  \ {\rm for} \ \  n\ge 2, \end{equation}
and we define
\begin{equation}\label{H-Cantor}
C(\mathcal{B},\mathcal{D}):= 
 \sum_{n=1}^\infty  \frac{\Z/d_n\cap [0, 1)}{\rho_n}.
\end{equation}
The set $C({\mathcal B}, {\mathcal D})$  
is a {\em homogeneous Cantor set} contained in the interval  $[0, \sum_{n=1}^\infty (d_n-1)(d_n\rho_n)^{-1}]$.
The reader may refer to
\cite{Fal, PS,FWW} 
on  homogenous Cantor sets. 

Define the Fourier transform  $\hat \mu$ of a probability measure $\mu$
by $\hat \mu(\xi):= \int_{\R} e^{-2\pi i\xi x}d\mu(x)$.
In this paper, we  consider
 the {\em Riesz product  measure} $\mu_{\mathcal{B}, \mathcal{D}}$
defined by
\begin{equation} \label {Rieszproduct.def}
\widehat{\mu_{\mathcal{D}, \mathcal{B}}}(\xi):=\prod_{n=1}^\infty H_{d_n}\Big(\frac{\xi}{d_n\rho_{n}}\Big),
\end{equation}
where
$$H_m(\xi):=\frac1{m} \sum_{j=0}^{m-1} e^{-2\pi i j \xi }= \frac{1-e^{-2\pi   mi \xi}}{m(1-e^{-2\pi i \xi})}, \ \ m\ge 1.$$
The  Riesz product measure $\mu_{{\mathcal B}, {\mathcal D}}$
is supported on the homogeneous Cantor set
$C({\mathcal B}, {\mathcal D})$ \cite{Fal, FWW},
and it becomes
 the  {\em Cantor measure} $\mu_{b, d}$  when $b_n=b$ and $d_n=d$ for all $n\ge 1$ \cite{D1, D2, DHL, DHS}.

A probability measure $\mu$ with compact
support is said to be a {\em spectral measure} if there exists a countable set $\Lambda$ of real numbers, called  a {\it spectrum}, such that $\{e^{-2\pi i \lambda x}: \lambda \in \Lambda \}$ forms an orthonormal basis for $L^2(\mu)$.
A classical example of spectral measures is the Lebesgue measure on $[0, 1]$, for which the set of integers is a spectrum.
Spectral properties for a probability measure are one of fundamental problems in Fourier
analysis and they have close connection to tiling as formulated in Fuglede's spectral set conjecture
\cite{F, T1, T2, T4, T3,  LW2, LW3, SW}.
In 1998,   Jorgensen and Pedersen \cite{JP} discovered the first families of non-atomic singular spectral measures, particularly Cantor measures $\mu_{b, 2}$
  with $4\le b\in 2\Z$. Since then, various  singular spectral measures on self-similar/self-affine fractal sets
  have  been found, see for instance \cite{D1, D2, DHL, DHLau, DHS, 
    DL, HLL, HuL, T1, JP, LW, LW2, Lai,   Li1, Li2,  S, W}.
  In this paper, we  consider
spectral properties of   Riesz product measures $\mu_{{\mathcal B}, {\mathcal D}}$  supported on {\bf non-self-similar}
homogenous Cantor sets $C({\mathcal B}, {\mathcal D})$.

\begin{theo} \label{main1}
Let  ${\mathcal B}:=\{b_n\}_{n=1}^\infty$
and ${\mathcal D}:=\{d_n\}_{n=1}^\infty$ be sequences of positive integers
that satisfy \eqref{bndn.eq} and
\begin{equation}\label{bndn.eq2}
2\le  {b_n}/{d_n}\in \Z \quad {\rm for \ all} \ \ n\ge 1.
\end{equation}
Then
\begin{equation} \label{spectrum}
\Lambda_{{\mathcal B}, {\mathcal D}}:=\bigcup_{L=1}^\infty  \Big( \sum_{n=1}^L \big([0, d_n)\cap \Z\big) \rho_n\Big)
\end{equation}
 is a spectrum of the Riesz product measure $\mu_{\mathcal{B}, \mathcal{D}}$
 in \eqref{Rieszproduct.def}.
\end{theo}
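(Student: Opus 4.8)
The plan is to verify that $\Lambda_{\mathcal B,\mathcal D}$ is an orthogonal set for $\mu_{\mathcal B,\mathcal D}$ and that the associated exponentials are complete in $L^2(\mu_{\mathcal B,\mathcal D})$.

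\textbf{Orthogonality.} Writing $e_\lambda(x):=e^{-2\pi i\lambda x}$, for $\lambda,\lambda'\in\Lambda_{\mathcal B,\mathcal D}$ one has $\langle e_\lambda,e_{\lambda'}\rangle_{L^2(\mu_{\mathcal B,\mathcal D})}=\widehat{\mu_{\mathcal B,\mathcal D}}(\lambda-\lambda')=\prod_{n\ge1}H_{d_n}\!\big((\lambda-\lambda')/(d_n\rho_n)\big)$, and $H_m(\eta)=0$ exactly when $\eta\in\frac1m\Z\setminus\Z$. By \eqref{bndn.eq} the representation $\lambda=\sum_{n=1}^L\ell_n\rho_n$ with $\ell_n\in[0,d_n)\cap\Z$ is unique, and by \eqref{bndn.eq2} we have $d_n\rho_n\mid\rho_{n+1}$. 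For $\lambda\ne\lambda'$ let $n_0$ be the least index at which the digits disagree; then every term of $\lambda-\lambda'$ of index $>n_0$ is divisible by $\rho_{n_0+1}$, hence by $d_{n_0}\rho_{n_0}$, so $\lambda-\lambda'\equiv(\ell_{n_0}-\ell'_{n_0})\rho_{n_0}\pmod{d_{n_0}\rho_{n_0}}$ with $0<|\ell_{n_0}-\ell'_{n_0}|<d_{n_0}$. Thus $\rho_{n_0}\mid(\lambda-\lambda')$ but $d_{n_0}\rho_{n_0}\nmid(\lambda-\lambda')$, the $n_0$-th factor of the product vanishes, and $\widehat{\mu_{\mathcal B,\mathcal D}}(\lambda-\lambda')=0$. (A refinement of this bookkeeping should also show that $\Lambda_{\mathcal B,\mathcal D}$ is a \emph{maximal} orthogonal set, a necessary condition for a spectrum; but for fractal measures maximality is far from sufficient, so the real content is completeness.)

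\textbf{Completeness via a transfer operator.} By the Jorgensen--Pedersen criterion I would show $Q(\xi):=\sum_{\lambda\in\Lambda_{\mathcal B,\mathcal D}}\big|\widehat{\mu_{\mathcal B,\mathcal D}}(\xi+\lambda)\big|^2\equiv1$; orthogonality already gives $Q\le1$ (Bessel). Let $\mu^{(k)},\Lambda^{(k)},Q^{(k)}$ be the Riesz product measure, candidate spectrum and $Q$-function for the shifted sequences $\{b_{n+k}\}_{n\ge1},\{d_{n+k}\}_{n\ge1}$. Using $\widehat{\mu_{\mathcal B,\mathcal D}}(\xi)=H_{d_1}(\xi/d_1)\,\widehat{\mu^{(1)}}(\xi/b_1)$, the set identity $\Lambda_{\mathcal B,\mathcal D}=\big([0,d_1)\cap\Z\big)+b_1\Lambda^{(1)}$ (a disjoint sum since $d_1<b_1$), and the $1$-periodicity of $H_{d_1}$ applied to the integer $(b_1/d_1)\lambda'$ for $\lambda'\in\Lambda^{(1)}\subset\Z$ — here the integrality $b_1/d_1\in\Z$ is essential — one obtains the transfer identity
\[
Q(\xi)=\sum_{\ell=0}^{d_1-1}\big|H_{d_1}((\xi+\ell)/d_1)\big|^2\,Q^{(1)}((\xi+\ell)/b_1)=:(\mathcal T_1Q^{(1)})(\xi),
\]
where the discrete Parseval identity $\sum_{\ell=0}^{d_1-1}\big|H_{d_1}((\xi+\ell)/d_1)\big|^2=1$ makes $\mathcal T_1$ a positive operator fixing the constant $1$. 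Iterating, $Q=\mathcal T_1\mathcal T_2\cdots\mathcal T_kQ^{(k)}$ for every $k$, and $\mathcal T_1\cdots\mathcal T_k$ is a sum over the length-$k$ branches of the tree whose branching at level $n$ is $d_n$.

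\textbf{The main obstacle} is upgrading $Q\le1$ to $Q\equiv1$ out of this \emph{non-autonomous} chain. In the self-similar case ($b_n\equiv b$, $d_n\equiv d$) the $\mathcal T_k$ all coincide, $Q=\mathcal TQ$ is a genuine fixed point with $Q\le1=\mathcal T1$, and one concludes by evaluating $Q$ at a maximum and pulling back along the IFS, the absence of ``extreme cycles'' (guaranteed by $d\mid b$) forcing $Q\equiv1$; this is essentially the Jorgensen--Pedersen/Strichartz argument. Here the scales $\rho_n$ and branchings $d_n$ vary, so there is no single dynamical system: one must instead follow, branch by branch of the tree, how the defect $1-Q$ is transported by the successive $\mathcal T_n$, and show it survives along no branch — this is precisely the role of the \emph{maximal tree mapping}. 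The integrality $b_n/d_n\in\Z$ keeps each dual digit block $[0,d_n)\cap\Z$ closed under the relevant reductions, so that $0$ is the only cycle and it is non-extreme, while the lacunarity $d_n\rho_n\le\rho_{n+1}/2$ (equivalent, given \eqref{bndn.eq}, to $b_n/d_n\ge2$) yields the quantitative control $\big|1-H_{d_n}(\eta)\big|\le\pi d_n|\eta|$ needed to push the defect out to spatial infinity along every branch. Carrying out this branchwise transport rigorously is the heart of the matter; the rest is routine.
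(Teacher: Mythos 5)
Your orthogonality argument is correct and matches the standard computation. But the completeness half --- which you yourself identify as ``the real content'' --- is not actually carried out: you set up the transfer identity $Q=\mathcal T_1\cdots\mathcal T_kQ^{(k)}$, correctly observe that the self-similar fixed-point/extreme-cycle argument does not apply to this non-autonomous chain, and then stop at ``carrying out this branchwise transport rigorously is the heart of the matter.'' That is precisely the step that constitutes the proof, so as written the proposal has a genuine gap. The cited control $|1-H_{d_n}(\eta)|\le \pi d_n|\eta|$ by itself does not close it: what one needs is a \emph{uniform positive lower bound on the tails of the infinite product at the shifted points} $\xi+\lambda$, and that requires locating $\xi+\lambda$ away from the zero set of every later factor $H_{d_n}$, which is where the hypothesis $b_n/d_n\ge 2$ enters quantitatively.

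For comparison, the paper does not iterate a transfer operator at all. It works with the truncated sets $\Lambda_L=\sum_{n=1}^L([0,d_n)\cap\Z)\rho_n$ and the exact finite Parseval identity $\sum_{\lambda\in\Lambda_L}\bigl|\prod_{n=1}^L H_{d_n}\bigl((\xi+\lambda)/(d_n\rho_n)\bigr)\bigr|^2=1$, proved by induction on $L$ from $\sum_{l=0}^{d_n-1}|H_{d_n}(\xi+l/d_n)|^2=1$ (this is your operator identity, but summed out completely rather than iterated). The key quantitative lemma is that for $\xi\in[0,1/2]$ and $\lambda\in\Lambda_L$ one has $\xi+\lambda\in[-\tfrac23\rho_{L+1},\tfrac12\rho_{L+1}]$ (using $b_k/d_k\ge2$), on which range each tail factor satisfies $|H_{d_n}(\eta/(d_n\rho_n))|\ge\exp(-C_0(|\eta|/\rho_n)^2)$, giving a uniform bound $\prod_{n>L}|H_{d_n}(\cdot)|\ge e^{-8C_0/9}>0$; hence the finite product is at most $C\,|\widehat{\mu_{\mathcal B,\mathcal D}}(\xi+\lambda)|$ with $C$ independent of $L$ and $\lambda$. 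Combining this with the finite Parseval identity and the Bessel bound $\sum_{\lambda\in\Lambda(\tau)}|\hat\mu(\xi+\lambda)|^2\le1$ (so the tail $\sum_{\lambda\notin\Lambda_L}$ tends to $0$) yields $1\le(1+\epsilon)\sum_{\lambda}|\hat\mu(\xi+\lambda)|^2$ for every $\epsilon>0$ by a squeeze, which is the Jorgensen--Pedersen criterion. If you want to salvage your write-up, replace the ``branchwise transport'' paragraph with this tail lower bound plus the squeeze; that is the missing idea.
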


For a probability measure $\mu$,  define its Hausdorff dimension $\dim_H(\mu)$ by
\begin{equation*}
\dim_H(\mu):=\inf\left\{\dim_H(E): \ \mu(E)=1\right\},
\end{equation*}
where $\dim_H(E)$ is the Hausdorff dimension of a set $E$.
It is known that  Cantor measures $\mu_{b, 2}$ with $4\le b\in 2\Z$
have their Hausdorff dimension $\ln 2/\ln b$.
Next we estimate  Hausdorff dimension of the Riesz product measure $\mu_{{\mathcal B}, {\mathcal D}}$, 
with its proof given in the appendix.

 \begin{prop}\label{rieszdimension.prop}
Let $0\le \alpha\le 1$, and let ${\mathcal B}$
and ${\mathcal D}$ be sequences   of positive integers
that satisfy \eqref{bndn.eq}, 
\begin{equation}\label{dnlimit}
\lim_{n\to \infty} d_n=+\infty,
\end{equation}
and
\begin{equation}\label{measuredimension}
\lim_{n\to \infty} \frac{\ln d_n}{\ln b_n}=\alpha.
\end{equation}
Then the Riesz product measure $\mu_{{\mathcal B}, {\mathcal D}}$ in  \eqref{Rieszproduct.def} has Hausdorff dimension $\alpha$,
\begin{equation*} 
\dim_H(\mu_{{\mathcal B}, {\mathcal D}})=\alpha.\end{equation*}
\end{prop}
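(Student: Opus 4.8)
The plan is to compute the Hausdorff dimension of $\mu_{\mathcal{B},\mathcal{D}}$ by exhibiting the local scaling behaviour of the measure along the natural cylinder structure of the homogeneous Cantor set $C(\mathcal{B},\mathcal{D})$, and then invoking a Billingsley-type lemma relating pointwise local dimension to Hausdorff dimension. First I would describe the cylinder sets: for a finite digit string $(\epsilon_1,\dots,\epsilon_N)$ with $\epsilon_n\in\{0,1,\dots,d_n-1\}$, let $I_N$ be the interval of length $\ell_N:=\sum_{n>N}(d_n-1)(d_n\rho_n)^{-1}\asymp (d_{N+1}\rho_{N+1})^{-1}$ anchored at $\sum_{n=1}^N \epsilon_n(d_n\rho_n)^{-1}$. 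Since $\widehat{\mu_{\mathcal{D},\mathcal{B}}}$ factors as the infinite product $\prod_n H_{d_n}(\xi/(d_n\rho_n))$, the measure $\mu_{\mathcal{B},\mathcal{D}}$ is the infinite convolution (equivalently, the distribution of $\sum_n X_n/\rho_n$ with $X_n$ uniform on $\{0,\dots,d_n-1\}/d_n$, independent), so each level-$N$ cylinder carries mass exactly $\prod_{n=1}^N d_n^{-1}$. Hence the local dimension at a $\mu$-typical point $x$ is governed by
\[
\frac{\ln \mu(I_N(x))}{\ln \ell_N}=\frac{\sum_{n=1}^N \ln d_n}{-\ln \ell_N}\approx \frac{\sum_{n=1}^N \ln d_n}{\ln \rho_{N+1}}=\frac{\sum_{n=1}^N \ln d_n}{\sum_{n=1}^N \ln b_n}.
\]
Under hypothesis \eqref{measuredimension}, $\ln d_n/\ln b_n\to\alpha$, and since \eqref{dnlimit} forces $\ln b_n\to\infty$ so that $\sum \ln b_n$ diverges, a Cesàro/Stolz argument gives that this ratio tends to $\alpha$ as $N\to\infty$, for every point $x$ in the support (the cylinder masses and lengths do not depend on which cylinder, only on the level).

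Next I would upgrade this cylinder-level statement to a genuine local dimension statement for arbitrary small balls $B(x,r)$. The standard comparison: given $r>0$, choose $N=N(r)$ with $\ell_{N+1}\le r<\ell_N$; then $B(x,r)$ meets a controlled number of level-$(N+1)$ cylinders. The key point needing a short estimate is that the gaps between consecutive level-$(N+1)$ cylinders inside a level-$N$ cylinder are comparable to $\ell_{N+1}$ up to a factor depending on $b_{N+1}/d_{N+1}\ge 2$, which by \eqref{bndn.eq2}... — wait, \eqref{bndn.eq2} is a hypothesis only in Theorem \ref{main1}, not here; here we only have \eqref{bndn.eq}, so $b_n>d_n$ gives a gap of length $\asymp (d_{N+1}\rho_{N+1})^{-1}(b_{N+1}/d_{N+1}-1+\text{l.o.t.})$, still positive. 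Consequently $B(x,r)$ intersects at most $O(1)$ (in fact boundedly many, the bound being eventually $2$ or $3$ once separation dominates) level-$(N+1)$ cylinders, giving
\[
\underline{\dim}_{\mathrm{loc}}\mu(x)=\liminf_{r\to 0}\frac{\ln\mu(B(x,r))}{\ln r}=\overline{\dim}_{\mathrm{loc}}\mu(x)=\alpha
\]
for every $x\in C(\mathcal{B},\mathcal{D})$. By the Billingsley lemma (e.g.\ as in Falconer, \cite{Fal}): if $\dim_{\mathrm{loc}}\mu(x)=\alpha$ for $\mu$-a.e.\ $x$ then $\dim_H(\mu)=\alpha$; the upper bound $\dim_H(\mu)\le\alpha$ follows because $\mu$ gives full mass to the set where $\overline{\dim}_{\mathrm{loc}}\le\alpha$, and the lower bound from $\underline{\dim}_{\mathrm{loc}}\ge\alpha$ a.e.

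The main obstacle — the only place beyond routine bookkeeping — is the passage from "cylinder of level $N$" to "ball of radius $r$" when the ratios $b_n/d_n$ are not bounded: if $b_n/d_n\to\infty$ along a subsequence, the cylinders of a given level are very sparse inside their parent, which actually helps separation, but one must be careful that $\ell_N$ and $(d_{N+1}\rho_{N+1})^{-1}$ stay comparable, i.e.\ that the tail sum $\sum_{n>N}(d_n-1)(d_n\rho_n)^{-1}$ is dominated by its first term up to a constant; this holds because $\rho_{n+1}/\rho_n=b_n\ge 2 d_n\cdot(1/2)\ge\dots$ — more simply, $\rho_{n+1}=b_n\rho_n>d_n\rho_n$, so the terms decay at least geometrically with ratio $<1$, uniformly. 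Once that comparability is in hand, the Cesàro limit computation from \eqref{measuredimension} and \eqref{dnlimit} delivers $\alpha$ cleanly and the proposition follows.
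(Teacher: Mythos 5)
Your route is genuinely different from the paper's: the paper rescales $C(\mathcal{B},\mathcal{D})$ to the normal form $\mathcal{E}(\mathcal{R},\mathcal{D})$, quotes the Feng--Wen--Wu dimension formula \eqref{Hausdorff-D} for the \emph{set}, checks $r_nb_n\to 1$, and then identifies $\mu_{\mathcal{B},\mathcal{D}}$ as the natural measure on the set; you instead compute local dimensions of the measure directly and invoke Billingsley. That is a legitimate and more self-contained plan (it also proves the measure statement without the paper's unargued "natural measure" step), but it forces you to reprove the hard half of the cited formula, and that is exactly where your write-up has a genuine error. The tail length is $\ell_N=\sum_{n>N}(d_n-1)(d_n\rho_n)^{-1}\asymp\rho_{N+1}^{-1}$, \emph{not} $(d_{N+1}\rho_{N+1})^{-1}$: the first term is $(d_{N+1}-1)/(d_{N+1}\rho_{N+1})$, which is of order $\rho_{N+1}^{-1}$ once $d_{N+1}$ is large. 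Consequently a level-$N$ cylinder of length $\asymp\rho_{N+1}^{-1}$ contains $d_{N+1}$ children whose left endpoints are spaced $(d_{N+1}\rho_{N+1})^{-1}$ apart, so for $r$ near $\ell_N$ the ball $B(x,r)$ meets on the order of $d_{N+1}$ level-$(N+1)$ cylinders --- and $d_{N+1}\to\infty$ by \eqref{dnlimit}. Your claim that it meets "at most $O(1)$, eventually $2$ or $3$" is false precisely under the hypotheses of the proposition.

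This is not mere bookkeeping, because the obvious patch fails: if for all $r\in[\ell_{N+1},\ell_N)$ you fall back on comparison with level-$N$ cylinders (of which $B(x,r)$ does meet at most $3$), you only get $\ln\mu(B(x,r))/\ln r\gtrsim\big(\sum_{n\le N}\ln d_n\big)/\big(\sum_{n\le N+1}\ln b_n\big)$, and the extra $\ln b_{N+1}$ in the denominator is not negligible in general --- e.g.\ for $b_n=2^{2^n}$, $d_n=2^{2^{n-1}}$ one has $\alpha=1/2$ but this ratio tends to $1/4$. The correct repair is to split $[\ell_{N+1},\ell_N)$ at the spacing scale $(d_{N+1}\rho_{N+1})^{-1}$ and use the refined count $\mu(B(x,r))\le C\,(1+r\,d_{N+1}\rho_{N+1})\prod_{n=1}^{N+1}d_n^{-1}$; checking the resulting exponent at the endpoints of the two subranges (it is monotone in between) gives $\ln\mu(B(x,r))/\ln r\ge\alpha-o(1)$ uniformly, after which your Stolz computation and the reverse inequality (from $B(x,\ell_N)\supseteq I_N(x)$, so $\underline{\dim}_{\mathrm{loc}}\mu(x)\le\lim\sum_{n\le N}\ln d_n/\sum_{n\le N}\ln b_n=\alpha$) finish the proof. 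The remaining ingredients of your argument --- the infinite-convolution description of $\mu$, the cylinder masses $\prod_{n\le N}d_n^{-1}$ (valid once the separation $\ell_N\le(d_N\rho_N)^{-1}$ holds, which it does for all large $N$), and the Ces\`aro/Stolz limit --- are correct.
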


Our main contribution of this paper,  the existence of
 spectral  measures with {\bf arbitrary} Hausdorff dimension in $[0,1]$, follows immediately from
 Theorem \ref{main1} and Proposition \ref{rieszdimension.prop}.

\begin{coro} \label{main2}
Let $0\le \alpha\le 1$, and let ${\mathcal B}$
and ${\mathcal D}$ be sequences   of positive integers
that satisfy \eqref{bndn.eq}, \eqref{bndn.eq2}, \eqref{dnlimit} and \eqref{measuredimension}.
Then  the Riesz product measure
 $\mu_{\mathcal{B}, \mathcal{D}}$ in \eqref{Rieszproduct.def} is a spectral measure with Hausdorff dimension $\alpha$.
\end{coro}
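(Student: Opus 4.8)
The plan is short, because Corollary \ref{main2} is nothing more than the conjunction of the two results that precede it. Of the hypotheses assumed here, \eqref{bndn.eq} and \eqref{bndn.eq2} are exactly those of Theorem \ref{main1}, which therefore applies and shows that $\mu_{\mathcal{B},\mathcal{D}}$ is a spectral measure (with spectrum $\Lambda_{\mathcal{B},\mathcal{D}}$ given by \eqref{spectrum}); and \eqref{bndn.eq}, \eqref{dnlimit}, \eqref{measuredimension} are exactly those of Proposition \ref{rieszdimension.prop}, which therefore applies and gives $\dim_{H}(\mu_{\mathcal{B},\mathcal{D}})=\alpha$. Putting the two conclusions together is the entire argument, so the corollary has no obstacle of its own; the content lies in the two ingredients, and below I outline how I would prove each. (Proposition \ref{rieszdimension.prop} is to be proved in the appendix, so I treat only Theorem \ref{main1} in detail.)

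For Theorem \ref{main1} I would first note that every element of $\Lambda_{\mathcal{B},\mathcal{D}}$ has a unique ``digit expansion'' $\sum_{n}c_{n}\rho_{n}$ with $c_{n}\in[0,d_{n})\cap\Z$ and all but finitely many $c_{n}$ equal to $0$; uniqueness follows from the telescoping bound $\sum_{n\le k}(d_{n}-1)\rho_{n}\le\sum_{n\le k}(b_{n}-1)\rho_{n}=\rho_{k+1}-1$. \emph{Orthonormality.} For distinct $\lambda=\sum_{n}c_{n}\rho_{n}$ and $\lambda'=\sum_{n}c'_{n}\rho_{n}$ let $k$ be the least index with $c_{k}\ne c'_{k}$; then $\rho_{k}\mid(\lambda-\lambda')$ and $(\lambda-\lambda')/\rho_{k}\equiv c_{k}-c'_{k}\pmod{b_{k}}$, and since \eqref{bndn.eq2} gives $d_{k}\mid b_{k}$ while $0<\abs{c_{k}-c'_{k}}<d_{k}$, we get $d_{k}\nmid(\lambda-\lambda')/\rho_{k}$, whence the factor $H_{d_{k}}\big((\lambda-\lambda')/(d_{k}\rho_{k})\big)$ of the product \eqref{Rieszproduct.def} vanishes and $\widehat{\mu_{\mathcal{B},\mathcal{D}}}(\lambda-\lambda')=0$. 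This is precisely the step that uses \eqref{bndn.eq2}.

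\emph{Completeness.} Here I would invoke the Jorgensen--Pedersen criterion: $\Lambda$ is a spectrum of $\mu_{\mathcal{B},\mathcal{D}}$ iff $Q_{\Lambda}(\xi):=\sum_{\lambda\in\Lambda}\abs{\widehat{\mu_{\mathcal{B},\mathcal{D}}}(\xi+\lambda)}^{2}\equiv1$ on $\R$, orthonormality already giving $Q_{\Lambda}\le1$. Factor $\mu_{\mathcal{B},\mathcal{D}}=\mu_{L}*\nu^{(L)}$, where $\mu_{L}$, with $\widehat{\mu_{L}}(\xi)=\prod_{n=1}^{L}H_{d_{n}}(\xi/(d_{n}\rho_{n}))$, is supported on $\prod_{n=1}^{L}d_{n}$ points and $\nu^{(L)}$ is supported in an interval of length $\sum_{n>L}(d_{n}-1)(d_{n}\rho_{n})^{-1}\to0$; put $\Lambda_{L}:=\sum_{n=1}^{L}([0,d_{n})\cap\Z)\rho_{n}$, so $\Lambda_{L}\subseteq\Lambda_{L+1}$ and $\bigcup_{L}\Lambda_{L}=\Lambda_{\mathcal{B},\mathcal{D}}$. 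The orthonormality argument applied to $\Lambda_{L}$, together with $\#\Lambda_{L}=\prod_{n=1}^{L}d_{n}=\dim L^{2}(\mu_{L})$, shows $\Lambda_{L}$ is a spectrum of the discrete measure $\mu_{L}$, i.e.\ $\sum_{\lambda\in\Lambda_{L}}\abs{\widehat{\mu_{L}}(\xi+\lambda)}^{2}\equiv1$; then, using $\abs{\widehat{\nu^{(L)}}}\le1$, $Q_{\Lambda_{\mathcal{B},\mathcal{D}}}(\xi)=\lim_{L}\sum_{\lambda\in\Lambda_{L}}\abs{\widehat{\mu_{L}}(\xi+\lambda)}^{2}\abs{\widehat{\nu^{(L)}}(\xi+\lambda)}^{2}$, and one wants this to be $\ge1$. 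The delicate point, which I expect to be the main obstacle, is exactly this passage: although $\sum_{\lambda\in\Lambda_{L}}\abs{\widehat{\mu_{L}}(\xi+\lambda)}^{2}=1$ for every $L$, the sets $\Lambda_{L}$ grow unboundedly and $\widehat{\nu^{(L)}}$ (which tends to $1$ only on compacta) need not stay near $1$ on all of $\xi+\Lambda_{L}$ --- indeed it can vanish there --- so one must show that the fraction of the mass carried by the $\lambda$ on which $\widehat{\nu^{(L)}}$ is not close to $1$ tends to $0$. This amounts to controlling how the mass of $\sum_{\lambda}\abs{\widehat{\mu_{L}}(\xi+\lambda)}^{2}$ is spread over $\Lambda_{L}$, a genuinely non-self-similar estimate since the factors $H_{d_{n}}$ vary with $n$, and is where I expect the real work to lie; the constant-$b_{n}$, constant-$d_{n}$ arguments do not transfer verbatim.

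For Proposition \ref{rieszdimension.prop} I would compute the local dimension of $\mu_{\mathcal{B},\mathcal{D}}$ from the cylinder structure of $C(\mathcal{B},\mathcal{D})$: a generation-$L$ cylinder has diameter comparable to $\rho_{L+1}^{-1}$ and $\mu_{\mathcal{B},\mathcal{D}}$-mass exactly $\prod_{n=1}^{L}d_{n}^{-1}$, so for $\mu_{\mathcal{B},\mathcal{D}}$-a.e.\ $x$ the local dimension equals $\lim_{L}(\sum_{n\le L}\ln d_{n})/(\sum_{n=1}^{L}\ln b_{n})$, which is $\alpha$ by Stolz--Ces\`aro from \eqref{measuredimension} (note $\sum_{n}\ln b_{n}=\infty$ as $b_{n}\ge3$), with \eqref{dnlimit} used to control the scales interpolating between consecutive $\rho_{n}^{-1}$. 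The mass distribution principle then gives $\dim_{H}(\mu_{\mathcal{B},\mathcal{D}})\ge\alpha$, and covering by generation-$L$ cylinders gives the reverse inequality.
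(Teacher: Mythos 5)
Your proof of the corollary is correct and is exactly the paper's: the hypotheses split into those of Theorem \ref{main1} (spectrality) and those of Proposition \ref{rieszdimension.prop} (Hausdorff dimension $\alpha$), and the corollary is just their conjunction. The additional sketches of the two ingredients are not needed here, since both are standalone results of the paper that may simply be cited (the paper proves Theorem \ref{main1} by a different route than your convolution factorization, namely via the maximal-tree-mapping machinery of Corollary \ref{coro1.7}, but that does not affect the corollary's proof).
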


Taking $\alpha=0$ and $1$ in Corollary \ref{main2} leads to the existence of zero-dimensional non-atomic spectral measures
 and one-dimensional singular spectral measures respectively, cf.
 Fuglede's conjecture that any spectral set with positive Lebesgue measure is a tile.

\section{Maximal orthogonal sets of Riesz product measures}

A spectral measure may admit various spectra.
Fourier series corresponding to different spectra could have completely different convergence rate
\cite{S, DHS13}. To study spectra of a probability measure $\mu$, we recall a weak notation,
 \emph{maximal orthogonal set}  $\Lambda$, which means that
$\{e^{-2\pi i \lambda x}: \lambda \in \Lambda \}$ is a maximal orthogonal set of $L^2(\mu)$.
 As $\Lambda$ is a maximal orthogonal set (spectrum) of a probability measure $\mu$ if and only if its shift $\Lambda+t$ is for any real $t\in \R$.
  So in this paper we may normalize maximal orthogonal sets (spectra) by assuming that they contain the origin.
In 2009,  Dutkay, Han and Sun  made their first attempt to characterize
 maximal orthogonal sets of fractal measures in \cite{DHS},
  where  a maximal orthogonal set of the one-fourth Cantor measure $\mu_{4,2}$
 is labeled as a binary tree with each vertex having finite regular lengths, see
    \cite{DHL} and references therein for general Cantor measures $\mu_{b,d}$ with $2\le b/d\in \Z$.
    In this section, we
    first consider labeling a maximal orthogonal set of the Riesz product $\mu_{{\mathcal B}, {\mathcal D}}$ on homogeneous Cantor set
    $C({\mathcal B}, {\mathcal D})$. 

For labeling a maximal orthogonal set, we introduce some notation.
Let $\Sigma_d :=\{0,1,\ldots, d-1 \}$ for $d\ge 1$.
For a sequence $\mathcal{D}:=\{d_n\}_{n=1}^\infty$ of positive integers, let
 $\Sigma_\mathcal{D}^0:=\vartheta$,
$\Sigma_\mathcal{D}^n:=\Sigma_{d_1} \times \Sigma_{d_2} \times \cdots \times \Sigma_{d_n}$ for $n\ge 1$,
and $\Sigma_\mathcal{D}^\ast:=\cup_{n=0}^\infty \Sigma_\mathcal{D}^n$ be the set of all finite words.
 We say that
 a tree is
 a {\it ${\mathcal D}$-adic tree} if
it  has $\var$,
 $\Sigma_\mathcal{D}^n$  and $\{\delta i,\  i\in \Sigma_{d_{n+1}}\}$
  as its root, the set of all $n$-th level nodes,
 and the set of offsprings of $\delta\in \Sigma_\mathcal{D}^n, n\ge 1$, respectively,
 where  $\var \delta :=\delta$ and
$\delta\delta'$ is the concatenation of words
$\delta \in \Sigma_{d_1} \times \Sigma_{d_2} \times \cdots \times \Sigma_{d_n}$ 
and
$\delta'\in \Sigma_{d_{n+1}} \times \Sigma_{d_{n+2}} \times \cdots \times \Sigma_{d_{n+m}}$. 
   Given sequences $\mathcal{B}=\{b_n\}_{n=1}^\infty$ and $\mathcal{D}=\{d_n\}_{n=1}^\infty$  of positive integers satisfying
\eqref{bndn.eq},
we say that
$\tau: \Sigma_\mathcal{D}^\ast\rightarrow \R$
is  a {\em maximal tree mapping} if
\begin{itemize}
\item[{(i)}] \ $\tau(\vartheta) =\tau(R_n(0^\infty)) =0$  for all $n\geq 1$;
\item[{(ii)}] \ $\tau (\delta_1\cdots \delta_n) \in (\delta_n+d_n\Z)\cap \{-\lfloor b_n/2\rfloor, -\lfloor b_n/2\rfloor+1, \ldots,
b_n-1-\lfloor b_n/2\rfloor \}$ for $\delta_1\cdots \delta_n\in \Sigma_{\mathcal D}^n, n\ge 1$;
and
\item[{(iii)}]  for any word $\delta\in \Sigma_\mathcal{D}^{n}$ 
 there exists
$\delta'\in \Sigma_{d_{n+1}} \times \Sigma_{d_{n+2}} \times \cdots \times \Sigma_{d_{n+m}}
$ of length $m\ge 1$ such that $\tau(R_k(\delta \delta' 0^\infty))=0$ for sufficiently large $k$,
\end{itemize}
where ${0}^{\infty} := 000\cdots$ and $R_k(\delta):=\delta_1\cdots \delta_k\in \Sigma_{\mathcal D}^k$ for $\delta=\delta_1\cdots\delta_{k}\delta_{k+1}\cdots
\in \otimes_{n=1}^\infty \Sigma_{d_n}$.
For a maximal tree mapping $\tau$, define
\begin{equation}\label{lambdatau.def}
\Lambda(\tau):=\Big\{ \sum_{n=1}^\infty \tau( R_n(\delta 0^\infty))\rho_{n}:\ \ \delta\in \Sigma_{\mathcal D}^\ast
\ {\rm with}\
\tau( R_n(\delta 0^\infty))=0 \ {\rm for \ sufficiently \ large} \ n \Big\},
\end{equation}
where
  $\rho_n, n\ge 1$, are given in \eqref{rhon.def}.
Following the argument used in \cite{DHL, D2}, we can characterize  maximal orthogonal sets of the  Riesz product measure $\mu_{\mathcal{B},\mathcal{D}}$
in \eqref{Rieszproduct.def}
by  maximal tree mappings.

\begin{theo}\label{th1.6}
Let  sequences ${\mathcal B}$
and ${\mathcal D}$  of positive integers
 satisfy \eqref{bndn.eq} and \eqref{bndn.eq2}, $\mu_{\mathcal{B},\mathcal{D}}$ be the Riesz product measure
in \eqref{Rieszproduct.def}, and for a maximal tree mapping $\tau$ let $\Lambda(\tau)$ be the set given in  \eqref{lambdatau.def}. Then $\Lambda$ is a maximal orthogonal set of
the Riesz product measure   $\mu_{\mathcal{B},\mathcal{D}}$ that contains the origin
 if and only if $\Lambda=\Lambda(\tau)$ for some maximal tree mapping $\tau$. 
\end{theo}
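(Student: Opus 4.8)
The plan is to reduce everything to two elementary facts plus one ``master computation'', and then treat the two implications separately. The first fact is that $e^{-2\pi i\lambda x}$ and $e^{-2\pi i\lambda' x}$ are orthogonal in $L^2(\mu_{{\mathcal B},{\mathcal D}})$ exactly when $\widehat{\mu_{{\mathcal B},{\mathcal D}}}(\lambda-\lambda')=0$; since $H_m(\eta)=0$ precisely when $m\eta\in\Z\setminus m\Z$, and since for fixed $\xi$ the arguments $\xi/(d_n\rho_n)$ tend to $0$ fast enough (the $d_n\rho_n$ grow at least geometrically under \eqref{bndn.eq} and \eqref{bndn.eq2}) that $\sum_n\bigl|1-H_{d_n}(\xi/(d_n\rho_n))\bigr|<\infty$, the product in \eqref{Rieszproduct.def} converges and
\begin{equation*}
\widehat{\mu_{{\mathcal B},{\mathcal D}}}(\xi)=0 \quad\Longleftrightarrow\quad \xi/\rho_n\in\Z\setminus d_n\Z \ \text{ for some }\ n\ge1 .
\end{equation*}
The second fact is that every integer $m$ has a unique balanced ${\mathcal B}$-adic expansion $m=\sum_{k\ge1}c_k(m)\rho_k$ with digits $c_k(m)$ in $J_k:=\{-\lfloor b_k/2\rfloor,\dots,b_k-1-\lfloor b_k/2\rfloor\}$ (a complete residue system modulo $b_k$) and $c_k(m)=0$ for all large $k$; I will write $\delta_k(m):=c_k(m)\bmod d_k\in\Sigma_{d_k}$ and $\delta(m):=\delta_1(m)\delta_2(m)\cdots$. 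I will also record that distinct real frequencies give distinct elements of $L^2(\mu_{{\mathcal B},{\mathcal D}})$: if $e^{-2\pi i\alpha x}=e^{-2\pi i\beta x}$ $\mu_{{\mathcal B},{\mathcal D}}$-a.e. then $|\widehat{\mu_{{\mathcal B},{\mathcal D}}}(\alpha-\beta)|=1$, which forces $|H_{d_n}((\alpha-\beta)/(d_n\rho_n))|=1$, i.e. $d_n\rho_n\mid(\alpha-\beta)$, for every $n$, whence $\alpha=\beta$.

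The master computation is the following. Let $\lambda\ne\lambda'$ be integers and let $n$ be the first index with $c_n(\lambda)\ne c_n(\lambda')$. Writing $\lambda-\lambda'=\rho_n q$ with $q=(c_n(\lambda)-c_n(\lambda'))+\sum_{k>n}(c_k(\lambda)-c_k(\lambda'))\rho_k/\rho_n\in\Z$ and using $b_n\mid(\rho_k/\rho_n)$ for $k>n$, one gets $q\equiv c_n(\lambda)-c_n(\lambda')\not\equiv0\pmod{b_n}$; hence $(\lambda-\lambda')/\rho_m$ is an integer only for $m\le n$, it is divisible by $d_m$ when $m<n$ (because $d_m\mid b_m$), and it equals $q$ when $m=n$. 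Combined with the description of the zero set, this gives
\begin{equation*}
\widehat{\mu_{{\mathcal B},{\mathcal D}}}(\lambda-\lambda')=0 \quad\Longleftrightarrow\quad d_n\nmid\bigl(c_n(\lambda)-c_n(\lambda')\bigr) \quad\Longleftrightarrow\quad \delta_n(\lambda)\ne\delta_n(\lambda').
\end{equation*}
In particular, orthogonal frequencies have distinct words, and the first level at which $\delta(\lambda)$ and $\delta(\lambda')$ differ coincides with the first level at which their balanced digits differ.

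For the ``if'' direction I would argue as follows. Given a maximal tree mapping $\tau$: by (i), $0=\sum_k\tau(R_k(0^\infty))\rho_k\in\Lambda(\tau)$. If $\lambda=\sum_k\tau(R_k(\delta0^\infty))\rho_k$ and $\lambda'=\sum_k\tau(R_k(\delta'0^\infty))\rho_k$ are distinct members of $\Lambda(\tau)$ and $n$ is the first level at which $\delta0^\infty$ and $\delta'0^\infty$ differ, then the level-$<n$ $\tau$-terms cancel, while the level-$n$ values lie in $J_n$ and, by (ii), are $\equiv\delta_n\ne\delta'_n\pmod{d_n}$; hence they are the $n$-th balanced digits of $\lambda,\lambda'$ and the master computation yields $\widehat{\mu_{{\mathcal B},{\mathcal D}}}(\lambda-\lambda')=0$, so $\Lambda(\tau)$ is orthogonal. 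For maximality, suppose $e^{-2\pi i\gamma x}$ is orthogonal to every $e^{-2\pi i\lambda x}$, $\lambda\in\Lambda(\tau)$; then $\widehat{\mu_{{\mathcal B},{\mathcal D}}}(\gamma)=0$, so $\gamma\in\Z$, with word $\delta:=\delta(\gamma)$. If $c_n(\gamma)\ne\tau(R_n(\delta0^\infty))$ for some $n$, take $n$ minimal and apply (iii) to the node $v:=R_n(\delta0^\infty)$ to obtain $\lambda^\ast\in\Lambda(\tau)$ whose word begins with $v$; then $\lambda^\ast$ has $n$-th balanced digit $\tau(v)$ and, by minimality of $n$, the same lower digits as $\gamma$, so $n$ is the first level at which the balanced digits of $\gamma$ and $\lambda^\ast$ differ, and $\gamma\perp\lambda^\ast$ together with the master computation forces $d_n\nmid(c_n(\gamma)-\tau(v))$, contradicting $\tau(v)\equiv\delta_n\equiv c_n(\gamma)\pmod{d_n}$ from (ii). Hence $c_n(\gamma)=\tau(R_n(\delta0^\infty))$ for all $n$, so $\gamma\in\Lambda(\tau)$, and $\Lambda(\tau)$ is a maximal orthogonal set containing $0$.

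For the ``only if'' direction, given a maximal orthogonal set $\Lambda\ni0$, each $\lambda\in\Lambda$ is an integer (orthogonality with $0$), and by the master computation the words $\delta(\lambda),\delta(\lambda')$ of distinct elements agree on exactly the prefix on which the balanced digits $c_k(\lambda),c_k(\lambda')$ agree; hence $\tau(R_n(\delta(\lambda)0^\infty)):=c_n(\lambda)$, together with $\tau(\vartheta):=0$, is a well-defined map on the nodes visited by some $\lambda\in\Lambda$, it satisfies (i) (because $0\in\Lambda$ visits the all-zero branch) and (ii) (because $c_n(\lambda)\in J_n$ and $c_n(\lambda)\equiv\delta_n(\lambda)\pmod{d_n}$), and $\Lambda\subseteq\Lambda(\tau)$. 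I expect the main obstacle to be the remaining point, where maximality is genuinely used: that $\tau$ extends to a \emph{totally defined} maximal tree mapping, i.e. that every node of the ${\mathcal D}$-adic tree is visited and that (iii) holds. My plan for it is to show that otherwise, exploiting the $b_n/d_n\ge2$-fold freedom in choosing a balanced digit within a prescribed residue class modulo $d_n$, one can construct an integer $\gamma\notin\Lambda$ whose exponential is still orthogonal to all of $\{e^{-2\pi i\lambda x}:\lambda\in\Lambda\}$, contradicting maximality (the frequencies being genuinely distinct by the first paragraph); this is the delicate bookkeeping carried out for Cantor measures in \cite{DHL,D2}, which should go through verbatim here since the relevant divisibility relations ($d_n\mid b_n$, $b_n\mid\rho_k/\rho_n$) are unchanged. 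Once $\tau$ is a maximal tree mapping, the ``if'' direction shows $\Lambda(\tau)$ is orthogonal, and $\Lambda\subseteq\Lambda(\tau)$ with $\Lambda$ maximal forces $\Lambda=\Lambda(\tau)$, which completes the argument.
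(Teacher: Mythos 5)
Your argument is correct and is essentially the approach the paper itself intends: the paper supplies no proof of this theorem beyond the remark ``following the argument used in \cite{DHL,D2}'', and your reduction to the zero set of $\widehat{\mu_{{\mathcal B},{\mathcal D}}}$ plus the unique balanced ${\mathcal B}$-adic digit expansion (so that orthogonality at the first level of digit disagreement is exactly $d_n\nmid(c_n(\lambda)-c_n(\lambda'))$, i.e.\ disagreement of the words) is precisely the mechanism of those references, carried over correctly since $d_n\mid b_n$ and $b_n\mid \rho_k/\rho_n$ for $k>n$. The single step you defer --- that every node is visited and that condition (iii) holds for the induced $\tau$ --- is the same step the paper delegates to \cite{DHL,D2}, and your plan for it does go through: if a node $v=wv_n$ with visited parent $w$ were unvisited, then $\gamma:=\sum_{k<n}\tau(R_k(w))\rho_k+v_n\rho_n$ is, by your master computation and the well-definedness of $\tau$ on visited nodes, orthogonal to and distinct from every element of $\Lambda$, contradicting maximality, after which (iii) follows because each visiting $\lambda$ has eventually-zero digits.
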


Denote by $\#(E)$ the cardinality of a finite set $E$, and define the
 {\em upper Beurling dimension} $\dim^+(\Lambda)$ of a discrete set $\Lambda$ of real numbers by
$$\dim^+(\Lambda):=\inf \left\{r>0: \  \limsup_{h\to \infty}
\sup_{x\in \R} \frac{\# (\Lambda\cap [x-h, x+h])}{(2h)^{r}}<\infty\right\}.$$
 Given  sequences ${\mathcal B}$
and ${\mathcal D}$  satisfying \eqref{bndn.eq},
\eqref{bndn.eq2}, \eqref{dnlimit} and \eqref{measuredimension}, one may verify that
the set $\Lambda(\tau)$ associated with a maximal tree mapping $\tau$ has
upper Beurling dimension 
 being less than or equal to
 Hausdorff dimension of the homogeneous Cantor set $C({\mathcal B}, {\mathcal D})$,
\begin{equation}\label{beurlingdimension.req}
\dim^+(\Lambda(\tau))\le \dim_H (C({\mathcal B}, {\mathcal D})).\end{equation}
The above result is established in \cite{DHSW}
for maximal orthogonal sets of  Cantor measures $\mu_{b, d}$
with $2\le b/d\in \Z$.
We remark that unlike Fourier frames on the unit interval \cite{Lan},
 spectra of Cantor measures with zero upper Beurling dimension  has been constructed by Dai, He and Lai in \cite{DHL}.
  The reader may refer to \cite{D1, D2, DHL, DHLau, DHS, DHS13, DHSW, 
  DL, HLL, HuL, T1, JP, LW, LW2, Lai, Lan, Li1, Li2, PS,
  S, W}  and  references therein
  for additional information on
self-similar/self-affine spectral measures.

By \eqref{beurlingdimension.req} and Theorem \ref{th1.6}, a necessary condition for a  countable set
 to be a spectrum of  the Riesz product measure $\mu_{{\mathcal B}, {\mathcal D}}$ is that
its upper Beurling dimension is less than or equal to the Hausdorff dimension of the measure $\mu_{{\mathcal B}, {\mathcal D}}$.
The above necessary condition is far from being sufficient. In fact, it is a very challenging problem to find
appropriate sufficient conditions, see \cite{D2, DHS, DHL} and references therein for recent advances.
In this paper, we provide a simple sufficient condition for spectra of
Riesz product measures.

\begin{theo}\label{th1.7}
Let  ${\mathcal B}:=\{b_j\}_{j=1}^\infty$
and ${\mathcal D}:=\{d_j\}_{j=1}^\infty$ be sequences of positive integers
that satisfy \eqref{bndn.eq} and \eqref{bndn.eq2},
$\tau$ be a maximal tree mapping and let $\Lambda=\Lambda(\tau)$ be as in
\eqref{lambdatau.def}.
Assume that
\begin{equation}\label{th1.7.necessary-1}
 \#\big\{n\ge 1, \tau( R_{n}(\delta 0^\infty))\ne 0\big\}<\infty \quad {\rm for \ all} \ \ \delta\in \Sigma_{\mathcal D}^\ast
\end{equation}
and
\begin{equation}\label{th1.7.necessary}
\sup_{n\ge 1}
\sup_{\delta\in \Sigma_{\mathcal D}^n} \sum_{j=1}^\infty \left(\frac{|\tau( R_{n+j}(\delta 0^\infty))|}{b_{n+j}}\right)^2<\infty,
\end{equation}
then  $\Lambda$ is a spectrum of the Riesz product measure  $\mu_{{\mathcal B}, {\mathcal D}}$ in \eqref{Rieszproduct.def}.
\end{theo}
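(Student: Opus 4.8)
The plan is to combine the structural characterization in Theorem~\ref{th1.6} with a quantitative argument that upgrades a maximal orthogonal set $\Lambda=\Lambda(\tau)$ to a genuine spectrum under hypotheses \eqref{th1.7.necessary-1} and \eqref{th1.7.necessary}. By Theorem~\ref{th1.6}, the exponentials $\{e^{-2\pi i\lambda x}:\lambda\in\Lambda(\tau)\}$ already form an orthonormal family in $L^2(\mu_{{\mathcal B},{\mathcal D}})$, so the only thing left is completeness. The standard route (as in \cite{JP,DHS,DHL}) is to show that the nonnegative function
\[
Q_\Lambda(\xi):=\sum_{\lambda\in\Lambda}\bigl|\widehat{\mu_{{\mathcal B},{\mathcal D}}}(\xi+\lambda)\bigr|^2
\]
is identically equal to $1$; orthonormality already gives $Q_\Lambda\le 1$ with $Q_\Lambda(0)=1$, and since $Q_\Lambda$ is an entire function of exponential type (it extends to an entire function of $\xi\in\CC$ because $\mu_{{\mathcal B},{\mathcal D}}$ is compactly supported), it suffices to verify $Q_\Lambda\equiv 1$ on $\R$, or equivalently that $Q_\Lambda$ attains its maximum value $1$ everywhere.

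First I would set up the telescoping product structure of $\widehat{\mu_{{\mathcal B},{\mathcal D}}}$. Writing $\Lambda_L:=\sum_{n=1}^{L}\tau(\cdot)\rho_n$ for the truncations coming from words of length $L$, one gets finite partial sums $Q_L(\xi):=\sum_{\lambda\in\Lambda_L}|\widehat{\mu_{{\mathcal B},{\mathcal D}}}(\xi+\lambda)|^2$ that increase to $Q_\Lambda$. The condition \eqref{bndn.eq2} that $b_n/d_n$ is an integer $\ge 2$ is exactly what makes $\{H_{d_n}(\cdot/d_n),\ \text{translates by }\Sigma_{d_n}\}$ a ``partition of unity'' type identity at each scale $n$; iterating this over the tree and using property (i)--(iii) of the maximal tree mapping, one obtains that $Q_L(\xi)$ equals a product of local factors times a ``tail'' factor $|\widehat{\mu^{(L)}}(\xi/\rho_{L+1})|^2$ summed over level-$L$ nodes, where $\mu^{(L)}$ is the shifted remainder measure. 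The key estimate is then a lower bound: I would show that for each fixed $\xi$,
\[
Q_\Lambda(\xi)\ \ge\ 1-C\,\varepsilon_L(\xi)
\]
where $\varepsilon_L(\xi)\to 0$ as $L\to\infty$, with the error controlled precisely by the quantity $\sum_{j\ge 1}(|\tau(R_{n+j}(\delta 0^\infty))|/b_{n+j})^2$ appearing in \eqref{th1.7.necessary}. Hypothesis \eqref{th1.7.necessary-1} guarantees that the defining sum for each $\lambda\in\Lambda(\tau)$ is genuinely finite so $\Lambda(\tau)$ is well-defined and the truncations exhaust it; hypothesis \eqref{th1.7.necessary} gives a uniform (in $\delta$ and $n$) bound that lets one pass to the limit and conclude the tail factor tends to $1$.

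The main obstacle, and the place where \eqref{th1.7.necessary} does the real work, is controlling the tail factor uniformly. Concretely, one needs an estimate of the form $|H_{d_n}(t)|^2\ge 1-c\,\min(1,|t|^2)$ or rather $\prod_{n>L}|H_{d_n}(\xi/(d_n\rho_n)+\text{shift})|^2\ge 1-c\sum_{n>L}(\cdot/b_n)^2$, and then to see that when the shifts are the tree values $\tau(R_n(\delta 0^\infty))$ the exponents are exactly $|\tau(R_{n}(\delta 0^\infty))|/b_n$ up to bounded factors. The square-summability in \eqref{th1.7.necessary}, uniform over all nodes $\delta$, is what forces this infinite product to stay close to $1$ uniformly, hence $Q_\Lambda(\xi)\ge 1$ for every $\xi$, and combined with $Q_\Lambda\le 1$ this yields $Q_\Lambda\equiv 1$. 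An alternative to the analytic-continuation argument is to invoke the standard lemma that a maximal orthogonal set is a spectrum iff $Q_\Lambda\equiv 1$ on $\R$ (see \cite{JP,DHS}); I expect the cleanest writeup routes through that lemma and concentrates all the effort on the uniform tail bound above, with \eqref{th1.7.necessary-1} used only to guarantee that $\Lambda(\tau)$ as defined in \eqref{lambdatau.def} is a well-defined maximal orthogonal set to which Theorem~\ref{th1.6} applies.
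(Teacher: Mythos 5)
Your overall skeleton (the Jorgensen--Pedersen criterion, $Q_\Lambda\le 1$ from Bessel, and a lower bound built on the exact level-$L$ identity $\sum_{\lambda\in\Lambda_L}\big|\prod_{n\le L}H_{d_n}\big(\tfrac{\xi+\lambda}{d_n\rho_n}\big)\big|^2=1$) matches the paper's. But the step you identify as doing ``the real work'' is wrong: hypothesis \eqref{th1.7.necessary} does \emph{not} force the tail products $\prod_{n>L}\big|H_{d_n}\big(\tfrac{\xi+\lambda}{d_n\rho_n}\big)\big|^2$ to tend to $1$ as $L\to\infty$, so your claimed estimate $Q_\Lambda(\xi)\ge 1-C\varepsilon_L(\xi)$ with $\varepsilon_L\to 0$ fails. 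The quantity $\sum_{j\ge1}\big(|\tau(R_{L+j}(\delta 0^\infty))|/b_{L+j}\big)^2$ is only assumed \emph{uniformly bounded}, not small: whenever $\tau(R_{L+j}(\delta 0^\infty))\ne 0$ it has modulus at least $d_{L+j}$, so a single nonzero tail entry contributes at least $(d_{L+j}/b_{L+j})^2$, which need not shrink (take $b_n=2d_n$). Concretely, if $\tau(R_{L+1}(\delta 0^\infty))=v\ne 0$, the factor at level $L+2$ is $|H_{d_{L+2}}(t)|$ with $|t|$ of order $|v|/(d_{L+2}b_{L+1})$, which can reach about $1/(2d_{L+2})$, where $|H_{d_{L+2}}|$ is roughly $2/\pi$; this can occur for every $L$. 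Your route therefore only yields $Q_\Lambda(\xi)\ge e^{-cS}>0$, with $S$ the supremum in \eqref{th1.7.necessary} --- a positive constant, not $1$.

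The missing idea is the splitting argument the paper uses to convert a merely \emph{uniform} comparison into the exact identity. One first shows $\big|\prod_{n=1}^{L}H_{d_n}\big(\tfrac{\xi+\lambda}{d_n\rho_n}\big)\big|\le C|\hat\mu_{{\mathcal B},{\mathcal D}}(\xi+\lambda)|$ for all $L$ and all $\lambda\in\Lambda_L$, with $C$ an absolute constant controlled by the supremum in \eqref{th1.7.necessary} (this, via $\exp(-C_0t^2)$-type lower bounds on the block products between consecutive nonzero tree values, is where \eqref{th1.7.necessary} actually enters). Then, fixing $L$ and $\epsilon>0$, one picks $M\gg L$ so that the level-$M$ partial product is within a factor $1+\epsilon$ of $|\hat\mu_{{\mathcal B},{\mathcal D}}(\xi+\lambda)|$ for the finitely many $\lambda\in\Lambda_L$, writes $1=\sum_{\lambda\in\Lambda_M}\big|\prod_{n\le M}\cdots\big|^2\le(1+\epsilon)\sum_{\lambda\in\Lambda_L}|\hat\mu_{{\mathcal B},{\mathcal D}}(\xi+\lambda)|^2+C^2\sum_{\lambda\in\Lambda(\tau)\setminus\Lambda_L}|\hat\mu_{{\mathcal B},{\mathcal D}}(\xi+\lambda)|^2$, and lets $L\to\infty$; the last term vanishes because it is a tail of the convergent series bounded by $1$. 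Without this device the lower bound cannot close, so the proposal has a genuine gap at its central step. (A minor additional point: \eqref{th1.7.necessary-1} is needed not just for well-definedness of $\Lambda(\tau)$ but to ensure $\Lambda_L\subset\Lambda_{L+1}\to\Lambda(\tau)$ and that the level-$L$ sum runs over all of $\Sigma_{\mathcal D}^L$, which is what makes the partition-of-unity identity exact.)
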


As an application, we have the following immediately:

\begin{coro}\label{coro1.7}
Let  ${\mathcal B}, {\mathcal D}, \tau$ and $\Lambda(\tau)$ be as in
Theorem \ref{th1.7}. If
\begin{equation} \label{coro1.7.necessary}
\sup_{n\ge 1}
\sup_{\delta\in \Sigma_{\mathcal D}^n} \#\big\{j\ge 1, \tau( R_{n+j}(\delta 0^\infty))\ne 0\big\}<\infty,
\end{equation}
then  $\Lambda(\tau)$ is a spectrum of the Riesz product measure  $\mu_{{\mathcal B}, {\mathcal D}}$ in \eqref{Rieszproduct.def}.
\end{coro}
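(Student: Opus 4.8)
The plan is to deduce Corollary~\ref{coro1.7} from Theorem~\ref{th1.7} by checking that hypothesis \eqref{coro1.7.necessary} forces both \eqref{th1.7.necessary-1} and \eqref{th1.7.necessary}. Set
$$N:=\sup_{n\ge 1}\ \sup_{\delta\in\Sigma_{\mathcal D}^n}\ \#\big\{j\ge 1:\ \tau(R_{n+j}(\delta 0^\infty))\ne 0\big\},$$
which is finite by assumption. Once the two conditions of Theorem~\ref{th1.7} have been verified, the conclusion follows immediately.

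First I would verify \eqref{th1.7.necessary-1}. Fix $\delta\in\Sigma_{\mathcal D}^\ast$, say $\delta\in\Sigma_{\mathcal D}^n$. For $1\le m\le n$ one has $R_m(\delta 0^\infty)=R_m(\delta)$, so at most $n$ of these indices $m$ can give $\tau(R_m(\delta 0^\infty))\ne 0$; for $m>n$, writing $m=n+j$ with $j\ge 1$, the number of indices with $\tau(R_{n+j}(\delta 0^\infty))\ne 0$ is at most $N$ by \eqref{coro1.7.necessary}. Hence $\#\{m\ge 1:\tau(R_m(\delta 0^\infty))\ne 0\}\le n+N<\infty$, which is \eqref{th1.7.necessary-1}.

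Next I would verify \eqref{th1.7.necessary}. The key input is property (ii) of a maximal tree mapping, which forces $\tau(\delta_1\cdots\delta_m)\in\{-\lfloor b_m/2\rfloor,\ldots,b_m-1-\lfloor b_m/2\rfloor\}$ and hence $|\tau(\delta_1\cdots\delta_m)|\le b_m$ for every finite word and every $m\ge 1$. Therefore each summand in \eqref{th1.7.necessary} satisfies $\big(|\tau(R_{n+j}(\delta 0^\infty))|/b_{n+j}\big)^2\le 1$, and it vanishes unless $\tau(R_{n+j}(\delta 0^\infty))\ne 0$, which occurs for at most $N$ values of $j$ by \eqref{coro1.7.necessary}. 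Summing gives
$$\sum_{j=1}^\infty\Big(\frac{|\tau(R_{n+j}(\delta 0^\infty))|}{b_{n+j}}\Big)^2\le N$$
uniformly over $n\ge 1$ and $\delta\in\Sigma_{\mathcal D}^n$, which is exactly \eqref{th1.7.necessary}. Applying Theorem~\ref{th1.7} then yields that $\Lambda(\tau)$ is a spectrum of $\mu_{{\mathcal B},{\mathcal D}}$.

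There is essentially no substantial obstacle here: the whole argument rests on the elementary observation, coming from property (ii), that $|\tau(\delta_1\cdots\delta_m)|\le b_m$ for every finite word, so that a uniform cap $N$ on the number of nonzero entries along each ray $\delta 0^\infty$ automatically upgrades to a uniform bound on the $\ell^2$-type sum appearing in \eqref{th1.7.necessary}. The only place calling for slight care is checking \eqref{th1.7.necessary-1} for short words $\delta$, where one must not forget the finitely many initial truncations $R_m(\delta)$ with $m\le n$.
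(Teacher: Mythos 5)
Your argument is correct and matches the paper's (the paper treats the corollary as immediate, justifying it in the remark that follows via the bound $|\tau(R_{n+j}(\delta 0^\infty))|\le b_{n+j}/2$ from property (ii) of a maximal tree mapping, together with the cap on the number of nonzero terms). Your verification of \eqref{th1.7.necessary-1} and \eqref{th1.7.necessary} is exactly the intended fleshing-out of that observation.
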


The requirements \eqref{th1.7.necessary-1} and
\eqref{th1.7.necessary} are clearly weaker than the one in \eqref{coro1.7.necessary},
since
$$d_{n+j}\le |\tau( R_{n+j}(\delta 0^\infty))|\le b_{n+j}/2\quad {\rm for \ all} \ \delta\in \Sigma_{\mathcal D}^n\ {\rm and} \ j\ge 1.$$
We remark that
those requirements are not equivalent in general
when ${\mathcal B}$ and  ${\mathcal D}$ satisfy \eqref{dnlimit} and
\eqref{measuredimension} for some $0\le \alpha<1$.

\smallskip

For   sequences $\mathcal{B}$ and $\mathcal{D}$  satisfying
\eqref{bndn.eq} and
\eqref{bndn.eq2}, one may verify that
 the map 
  defined by
$$\tau_{{\mathcal B}, {\mathcal D}} (\delta_1\cdots \delta_n):=\delta_n\ \  {\rm for} \ \ \delta_1\ldots \delta_n\in \Sigma_{\mathcal D}^n\ \ {\rm and}\ \ n\ge 0,$$
is a maximal tree mapping satisfying \eqref{coro1.7.necessary}, and that
the corresponding set $\Lambda(\tau_{{\mathcal B}, {\mathcal D}})$ is same as the spectral set $\Lambda_{{\mathcal B}, {\mathcal D}}$ in \eqref{spectrum},
\begin{equation*}\label{lambdaspectrum}
\Lambda(\tau_{{\mathcal B}, {\mathcal D}})=\Lambda_{{\mathcal B}, {\mathcal D}}.
\end{equation*}
Therefore the spectral conclusion in Theorem \ref{main1} follows from Corollary \ref{coro1.7}. 

\bigskip
\section {Spectra of  Riesz product measures}

In this section, we prove Theorems  \ref{th1.7}.
 For that purpose, we recall a characterization about
spectra of a probability measure $\mu$  with compact support, given by Jorgensen and Pederson in \cite{JP}, which states that
{\em  a countable set
$\Lambda$ containing zero is a spectrum for $L^2(\mu)$ if and only if
\begin{equation}\label{Q.def}
Q(\xi):=\sum_{\lambda \in \Lambda} |\hat{\mu}(\xi+\lambda)|^2 \equiv 1
 \ \ {\rm for \ all}\ \   \xi \in \R.
\end{equation}
}
Denote by $\deg(G)$ the degree of a trigonometric polynomial $G$.
Recall that $Q(\xi)$ in \eqref{Q.def} is a real analytic function. Then the proof of Theorem \ref{th1.7} reduces to establishing the following
general theorem.

\begin{theo}\label{main1.general.tm}
Let sequences ${\mathcal B}:=\{b_n\}_{n=1}^\infty$ and ${\mathcal D}:=\{d_n\}_{n=1}^\infty$
 of positive integers
 satisfy \eqref{bndn.eq} and \eqref{bndn.eq2},
$\tau$ be a maximal tree mapping satisfying \eqref{th1.7.necessary-1} and \eqref{th1.7.necessary}, and let $\Lambda(\tau)$ be as in
\eqref{lambdatau.def}.
Assume that  $\{G_n\}_{n=1}^\infty$ is a family of trigonometric polynomials satisfying
$G_n(0)=1$,
\begin{equation} \label{main1.general.tm.eq2}
\sum_{l=0}^{d_n-1} |G_n(\xi+l/d_n)|^2=1, \ \ \xi\in \R,
\end{equation}
\begin{equation} \label{main1.general.tm.eq1}
{\rm deg}(G_n)\le D_0 d_n,
\end{equation}
and
 \begin{equation}\label{main1.general.tm.eq3}
 \inf_{d_n\xi\in [-2/3, 1/2]} |G_n(\xi)|\ge D_1,
\end{equation}
where $D_0, D_1$ are positive constants independent of $n\ge 1$.
%
%
%
%
Define a compactly supported distribution $\phi$ with help of its Fourier transform by
\begin{equation} \label{main1.general.tm.eq4}
\hat \phi(\xi):=\prod_{n=1}^\infty G_n\Big(\frac{\xi}{d_n\rho_n}\Big),
\end{equation}
where $\{\rho_n\}_{n=1}^\infty$ is given in \eqref{rhon.def}.
Then
\begin{equation} \label{main1.general.tm.eq5}
\sum_{\lambda\in \Lambda (\tau)} |\hat \phi(\xi+\lambda)|^2=1
\quad {\rm for \ all} \ \ \xi \in [0, 1/2].
\end{equation}
\end{theo}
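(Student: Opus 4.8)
The plan is to prove the identity \eqref{main1.general.tm.eq5} by sandwiching the (real-analytic) function $Q(\xi):=\sum_{\lambda\in\Lambda(\tau)}|\hat\phi(\xi+\lambda)|^2$ between $1$ and $1$ on $[0,1/2]$. First I would clean up the indexing of $\Lambda(\tau)$: hypothesis \eqref{th1.7.necessary-1} says that along the zero-padding of any finite word the values of $\tau$ vanish eventually, so $\Lambda(\tau)=\{\lambda_\delta:\delta\in\Sigma_{\mathcal D}^\ast\}$ with $\lambda_\delta:=\sum_{n\ge1}\tau(R_n(\delta 0^\infty))\rho_n$, and the finite sets $\Lambda_L:=\{\lambda_\delta:\delta\in\Sigma_{\mathcal D}^L\}$ increase to $\Lambda(\tau)$ (padding a word by a $0$ does not change its $\lambda$); hence $Q_L(\xi):=\sum_{\lambda\in\Lambda_L}|\hat\phi(\xi+\lambda)|^2\uparrow Q(\xi)$ by monotone convergence.

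Next I would record the basic factorization. Put $c_n:=\tau(R_n(\delta 0^\infty))$, so $c_n\equiv\delta_n\pmod{d_n}$ and $c_n\in\{-\lfloor b_n/2\rfloor,\dots,\lceil b_n/2\rceil-1\}$ by the definition of a maximal tree mapping. Since $b_n/d_n\in\Z$ by \eqref{bndn.eq2}, in $G_m(\tfrac{\xi+\lambda_\delta}{d_m\rho_m})$ the contribution of every term $c_n\rho_n$ with $n>m$ is an integer and drops out by $1$-periodicity of $G_m$, and the term with $n=m$ reduces modulo $1$ to $\delta_m/d_m$; this gives
\[
|\hat\phi(\xi+\lambda_\delta)|^2=\prod_{m=1}^\infty\Big|G_m\Big(\tfrac{\delta_m+\xi_m^{(\delta)}}{d_m}\Big)\Big|^2,\qquad \xi_m^{(\delta)}:=\frac{\xi+\sum_{n<m}c_n\rho_n}{\rho_m},
\]
where $\delta_m:=0$ for $m>L$ and, crucially, $\xi_m^{(\delta)}$ depends only on $\xi$ and $\delta_1,\dots,\delta_{m-1}$. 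A geometric estimate based on $b_n\ge4$ and $b_n\rho_n=\rho_{n+1}$ shows $\sum_{n<m}c_n\rho_n\in(-\tfrac23\rho_m,\tfrac12\rho_m)$, so for $\xi\in[0,1/2]$ one gets $\xi_m^{(\delta)}\in[-2/3,1/2]$ for every $m$; this is exactly the range in which \eqref{main1.general.tm.eq3} applies, and it pins down both the constant $2/3$ and the interval $[0,1/2]$ in the statement.

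The upper bound $Q\le1$ is then immediate: bounding $\prod_{m>L}|G_m(\xi_m^{(\delta)}/d_m)|^2\le1$ and summing successively over $\delta_L,\delta_{L-1},\dots,\delta_1$ — at each step $\xi_m^{(\delta)}$ does not involve the variable being summed, and $\sum_{\delta_m}|G_m(\tfrac{\delta_m+\xi_m^{(\delta)}}{d_m})|^2=1$ by \eqref{main1.general.tm.eq2} — gives $\sum_{\delta\in\Sigma_{\mathcal D}^L}\prod_{m=1}^L|G_m(\tfrac{\delta_m+\xi_m^{(\delta)}}{d_m})|^2=1$, hence $Q_L(\xi)\le1$ and $Q(\xi)\le1$.

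For the lower bound — the heart of the argument — write $w_\delta:=\prod_{m=1}^L|G_m(\tfrac{\delta_m+\xi_m^{(\delta)}}{d_m})|^2\ge0$ and $r_\delta:=\prod_{m>L}|G_m(\xi_m^{(\delta)}/d_m)|^2\in[0,1]$, so that $\sum_{\delta\in\Sigma_{\mathcal D}^L}w_\delta=1$ and $1-Q_L(\xi)=\sum_\delta w_\delta(1-r_\delta)$; it suffices to show $\sum_{\delta\in\Sigma_{\mathcal D}^L}w_\delta(1-r_\delta)\to0$ as $L\to\infty$, uniformly for $\xi\in[0,1/2]$. Here I would use: (a) the degree bound \eqref{main1.general.tm.eq1} together with $\|G_m\|_\infty\le1$ (from \eqref{main1.general.tm.eq2}), which makes $\eta\mapsto|G_m(\eta/d_m)|^2$ a trigonometric polynomial in $\eta$ of frequency at most $2D_0$ with maximum $1$ at $\eta=0$; Bernstein's inequality then gives the quadratic bound $1-|G_m(\eta/d_m)|^2\le 8\pi^2D_0^2\,\eta^2$, hence $1-r_\delta\le 8\pi^2D_0^2\sum_{m>L}(\xi_m^{(\delta)})^2$; (b) the representation of $\xi_m^{(\delta)}$ as a geometrically weighted average of the quantities $c_n/b_n$, which via Cauchy–Schwarz and a Fubini rearrangement reduces $\sum_{m>L}(\xi_m^{(\delta)})^2$ to an $L$-decaying term plus a multiple of the tail sums $\sum_{n\ge L}(|c_n|/b_n)^2=\sum_{j\ge1}(|\tau(R_{L+j}(\delta 0^\infty))|/b_{L+j})^2$; and (c) hypotheses \eqref{th1.7.necessary-1} and \eqref{th1.7.necessary}, which control these tail sums uniformly and, after integrating against the probability weights $w_\delta$ and isolating the finitely many factors $G_m$ with $m$ close to $L$, force $\sum_\delta w_\delta(1-r_\delta)\to0$. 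The hard part will be this last step: the factors $G_m$ with $m$ just beyond $L$ are a priori only bounded below by the fixed constant $D_1$, not close to $1$, so one must exploit the $\ell^2$ (not merely $\ell^1$) summability in \eqref{th1.7.necessary} to show that the words $\delta$ for which these factors are not nearly $1$ carry asymptotically negligible $w_\delta$-mass — and it is precisely here that the non-self-similarity of $C(\mathcal B,\mathcal D)$ makes the estimate genuinely more delicate than its Cantor-measure counterpart. Once $Q(\xi)=\lim_L Q_L(\xi)=1$ for $\xi\in[0,1/2]$, \eqref{main1.general.tm.eq5} follows (and real-analyticity of $Q$ then propagates the identity to all of $\R$ in the deduction of Theorem \ref{th1.7}).
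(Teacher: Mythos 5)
Your setup, normalization, and upper bound all match the paper: the reduction by $1$-periodicity to $\prod_m|G_m((\delta_m+\xi_m^{(\delta)})/d_m)|^2$, the verification that $\xi_m^{(\delta)}\in[-2/3,1/2]$ for $\xi\in[0,1/2]$, the telescoping identity $\sum_{\delta\in\Sigma_{\mathcal D}^L}w_\delta=1$ from \eqref{main1.general.tm.eq2}, and the conclusion $Q\le 1$ are exactly the paper's steps. The gap is in your lower bound, precisely at the step you flag as ``the hard part.'' Your plan is to show $1-Q_L(\xi)=\sum_{\delta\in\Sigma_{\mathcal D}^L}w_\delta(1-r_\delta)\to 0$ by arguing that words with $1-r_\delta$ not small carry negligible $w_\delta$-mass. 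This is not merely unproved; it is essentially circular. Fix a word $\delta^{(0)}\in\Sigma_{\mathcal D}^1$ and track its descendants: the total contribution of the level-$L$ extensions of $\delta^{(0)}$ to $\sum_\delta w_\delta(1-r_\delta)$ equals $w_{\delta^{(0)}}-\sum|\hat\phi(\xi+\lambda)|^2$, the sum running over the part of $\Lambda_L$ lying below $\delta^{(0)}$; showing this tends to zero is exactly the theorem restricted to the subtree rooted at $\delta^{(0)}$. Moreover, hypothesis \eqref{th1.7.necessary} only gives a \emph{uniform bound} on the tail sums $\sum_j(|\tau(R_{L+j}(\delta 0^\infty))|/b_{L+j})^2$, not smallness, and nothing in the definition of a maximal tree mapping prevents \emph{every} $\delta\in\Sigma_{\mathcal D}^L$ from having a nonzero tail entry at position $L+1$; so there is no a priori concentration of $w_\delta$-mass on words with $r_\delta$ near $1$.

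The paper closes the argument by a different mechanism, which you have all the ingredients for but do not assemble. From \eqref{main1.general.tm.eq3}, the quadratic Bernstein bound, the containment $\xi_m^{(\delta)}\in[-2/3,1/2]$, and the $\ell^2$ hypothesis \eqref{th1.7.necessary}, one gets the \emph{uniform two-sided comparison} $r_\delta\ge C^{-2}$, i.e.
$\bigl|\prod_{n=1}^{L}G_n\bigl(\tfrac{\xi+\lambda}{d_n\rho_n}\bigr)\bigr|\le C|\hat\phi(\xi+\lambda)|$
for all $L$ and all $\lambda\in\Lambda_L$, with $C$ independent of $L$, $\delta$ and $\xi\in[0,1/2]$ (this is \eqref{main1.general.tm.pf.eq11}). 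Then, for fixed $L$ and $\epsilon>0$, one chooses $M\ge L$ so large that the level-$M$ partial product is within a factor $(1+\epsilon)$ of $|\hat\phi|$ for the \emph{finitely many} $\lambda\in\Lambda_L$, splits the exact identity $1=\sum_{\lambda\in\Lambda_M}\bigl|\prod_{n\le M}G_n\bigr|^2$ over $\Lambda_L$ and $\Lambda_M\setminus\Lambda_L$, and bounds the second piece by $C^2\sum_{\lambda\in\Lambda(\tau)\setminus\Lambda_L}|\hat\phi(\xi+\lambda)|^2$, which tends to $0$ as $L\to\infty$ because the full series converges (by the upper bound you already proved). This yields $1\le(1+\epsilon)Q(\xi)$ without ever estimating $\sum_\delta w_\delta(1-r_\delta)$ directly. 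To repair your write-up you should replace step (c) by this two-scale argument; as it stands, the proposal does not constitute a proof of the lower bound.
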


\begin{proof} Observe from \eqref{main1.general.tm.eq2} that
\begin{equation}\label{main1.general.tm.pf.eq9}
\|G_n\|_\infty:=\sup_{\xi\in \R} |G_n(\xi)|\le 1 \end{equation}
for all $n\ge 1$.
By  \eqref{main1.general.tm.eq1}, \eqref{main1.general.tm.pf.eq9} and Bernstein inequality for trigonometric polynomials,
we obtain that
\begin{equation} \label{main1.general.tm.pf.eq1-}
|G_n(\eta/d_n)-1|\le   \|G_n^\prime\|_\infty |\eta/d_n|\le D_0 \|G_n\|_\infty |\eta|\le D_0|\eta|
\end{equation}
and
\begin{eqnarray} \label{main1.general.tm.pf.eq1}
0 & \le  & 1-|G_n(\eta/d_n)|\le
1- |G_n(\eta/d_n)|^2\nonumber\\
& \le &
\big(\|G_n^{\prime\prime}\|_\infty\|G_n\|_\infty+\|G_n^\prime\|_\infty^2\big)\
 |\eta/d_n|^2 
\le  2D_0^2 |\eta|^2
\end{eqnarray}
for all  $\eta\in [-1, 1]$ and $n\ge 1$.
 Thus
 $$\sum_{n=1}^\infty  \Big|G_n\Big(\frac{\xi}{d_n\rho_n}\Big)-1\Big|
 \le  D_0 \sum_{n=1}^\infty \rho_n^{-1}\le D_0\sum_{n=1}^\infty 4^{1-n}= \frac{4D_0}{3}
 $$
 by  \eqref{bndn.eq}, \eqref{bndn.eq2} and \eqref{main1.general.tm.pf.eq1-}.
Therefore  the infinite product in  \eqref{main1.general.tm.eq4}
 is well-defined and $\phi$ is a  compactly supported  distribution.

\smallskip

Let $C_0>0$ be so chosen that
\begin{equation}\label{C0.def}
\max(D_1, 1- 2D_0^2t^2)\ge \exp(-C_0 t^2)\quad {\rm for \ all} \ \ 0\le t\le 2/3.
\end{equation}
Then for $L\ge 1$ and $\eta\in [-2\rho_L/3, \rho_L/2]$,
we  obtain from
\eqref{bndn.eq}, \eqref{bndn.eq2},
\eqref{main1.general.tm.eq3}, \eqref{main1.general.tm.pf.eq1} and \eqref{C0.def}
that
\begin{equation} \label{main1.general.tm.pf.eq5}
 \Big|G_L\Big( \frac{\eta}{d_L\rho_L}\Big)\Big|
 \ge \max\big(D_1, 1- 2D_0^2(|\eta|/\rho_L)^2\big)\ge \exp\big(-C_0\big({|\eta|}/{\rho_L}\big)^2\big) 
\end{equation}
 and
\begin{eqnarray} \label{main1.general.tm.pf.eq7}
\prod_{n=L}^{\infty} \Big|G_n\Big( \frac{\eta}{d_n\rho_n}\Big)\Big|
&\ge &  \prod_{n=L}^{\infty} \exp\big(-C_0\big({|\eta|}/{\rho_n}\big)^2\big)\nonumber\\
& \ge &  \prod_{n=L}^{\infty} \exp\big(-C_0(|\eta|/\rho_L)^2 \times 4^{2(n-L)}\big) \ge
\exp\big(-2C_0(|\eta|/\rho_L)^2\big).
\end{eqnarray}

\smallskip

For $\xi\in [0, 1/2]$ and $\delta\in \Sigma_{\mathcal D}^L$,
we obtained from \eqref{bndn.eq}, \eqref{rhon.def}, \eqref{bndn.eq2} and the definition of a maximal tree mapping that
\begin{equation}\label{main1.general.tm.pf.eq3}
\xi+\sum_{k=1}^L \tau(R_k(\delta 0^\infty))\rho_k\ge -\sum_{k=1}^L \lfloor b_k/2\rfloor \rho_k\ge  -\sum_{k=1}^L \frac{\rho_{k+1}}{2} \ge -\frac{2}{3}
\rho_{L+1}
\end{equation}
and
\begin{equation} \label{main1.general.tm.pf.eq4}
\xi+\sum_{k=1}^L \tau(R_k(\delta 0^\infty))\rho_k\le  \frac{1}{2}+\sum_{k=1}^L (b_k-1-\lfloor b_k/2\rfloor) \rho_k
\le \frac{1}{2}+\sum_{k=1}^L \frac{\rho_{k+1}-\rho_k}{2}\le \frac{1}{2}\rho_{L+1}.
\end{equation}

For $\delta\in \Sigma_{\mathcal D}^L$, let
$${\mathcal K}(\delta)=\{k\ge 1,\  R_{L+k}(\delta 0^\infty)\ne 0\}.$$
For the nontrivial case that  ${\mathcal K}(\delta)\ne \emptyset$,
there exist finitely many positive integers
$n_1<n_2<\ldots<n_K$
by
 \eqref{th1.7.necessary-1}
 such that
$${\mathcal K}(\delta)=\{n_1, n_2, \ldots, n_K\}.$$
Set $n_{K+1}=+\infty$ and $\lambda=\sum_{k=1}^\infty \tau(R_k(\delta 0^\infty))\rho_k$.
Write
\begin{eqnarray*} 
\Big|\prod_{n=1}^L G_n\Big(\frac{\xi+\lambda}{d_n\rho_n}\Big)\Big|
& = & |\hat \phi(\xi+\lambda)|
\times
\left( \prod_{n=L+1}^{L+n_{1}-1}
\Big| G_n\Big(\frac{\xi+\sum_{k=1}^{L} \tau(R_k(\delta 0^\infty))\rho_k}{d_n\rho_n}\Big)\Big|\right)^{-1}\nonumber\\
& &
\quad \times
\Big|G_{L+n_1}\Big(\frac{\xi+\sum_{k=1}^{L} \tau(R_k(\delta 0^\infty))\rho_k}{d_{L+n_1}\rho_{L+n_1}}\Big)\Big|^{-1}\nonumber\\
& & \quad
\times \left(\prod_{i=2}^{K} \Big|G_{L+n_i}\Big(\frac{\xi+\sum_{k=1}^{L+n_{i-1}} \tau(R_k(\delta 0^\infty))\rho_k}{d_{L+n_i}\rho_{L+n_i}}\Big)\Big|
\right)^{-1}\nonumber
\\
& & \quad
\times \left(\prod_{i=1}^K
 \left( \prod_{n=L+n_i+1}^{L+n_{i+1}-1}
\Big| G_n\Big(\frac{\xi+\sum_{k=1}^{L+n_i} \tau(R_k(\delta 0^\infty))\rho_k}{d_n\rho_n}\Big)\Big|\right)^{-1}\right)\nonumber\\
\end{eqnarray*}
Then by \eqref{main1.general.tm.pf.eq9}, 
 \eqref{main1.general.tm.pf.eq5}--\eqref{main1.general.tm.pf.eq4} and the  definition of a maximal tree mapping,
 we get
\begin{eqnarray}\label{main1.general.tm.pf.eq10}
& & \Big|\prod_{n=1}^L G_n\Big(\frac{\xi+\lambda}{d_n\rho_n}\Big)\Big|\nonumber\\
& \le &  \exp\left(C_0 \left(\frac{2\rho_{L+1}}{3\rho_{L+n_1}}\right)^2
+ C_0\sum_{i=2}^K \left(\frac{ (|\tau(R_{L+n_{i-1}}(\delta 0^\infty))|+2/3) \rho_{L+n_{i-1}}}{\rho_{L+n_i}}\right)^2 \right)
\nonumber\\
& & \quad
\times \left(\prod_{i=1}^K
 \left( \prod_{n=L+n_i+1}^{\infty}
\Big| G_n\Big(\frac{\xi+\sum_{k=1}^{L+n_i} \tau(R_k(\delta 0^\infty))\rho_k}{d_n\rho_n}\Big)\Big|\right)^{-1}\right)\nonumber\\
& & \quad
\times
\left( \prod_{n=L+1}^{\infty}
\Big| G_n\Big(\frac{\xi+\sum_{k=1}^{L} \tau(R_k(\delta 0^\infty))\rho_k}{d_n\rho_n}\Big)\Big|^{-1}\right)
\times |\hat \phi(\xi+\lambda)|\nonumber\\
& \le &
 \exp\left(C_0 +4 C_0\sum_{i=2}^K \left(\frac{ |\tau(R_{L+n_{i-1}}(\delta 0^\infty))|}{b_{L+n_{i-1}}}\right)^2 \right)\nonumber\\
 & &  \times
 \exp\left(2C_0 + 2C_0
 \sum_{i=2}^K \left(\frac{ (|\tau(R_{L+n_{i}}(\delta 0^\infty)|+2/3) \rho_{L+n_{i}}}{\rho_{L+n_i+1}}\right)^2
\right)
\times |\hat \phi(\xi+\lambda)|
\nonumber\\
\qquad &\le &  \exp\left(3C_0+ 12 C_0\sum_{j=1}^\infty\left(\frac{ |\tau(R_{j+L}(\delta 0^\infty)| }{b_{j+L}}\right)^2\right)\times
 |\hat \phi(\xi+\lambda)|, \ \  \xi\in [0, 1/2].
 \end{eqnarray}

For the trivial case that ${\mathcal K}(\delta)=\emptyset$,
$$\lambda:=\sum_{k=1}^\infty \tau(R_k(\delta 0^\infty))\rho_k=\sum_{k=1}^L \tau(R_k(\delta 0^\infty))\rho_k$$
and for $\xi\in [0, 1/2]$,
\begin{eqnarray} \label{main1.general.tm.pf.eq8}
\Big|\prod_{n=1}^L G_n\Big(\frac{\xi+\lambda}{d_n\rho_n}\Big)\Big|
 & = &
 \left(\prod_{n=L+1}^\infty
\Big| G_n\Big(\frac{\xi+\sum_{k=1}^L \tau(R_k(\delta 0^\infty))\rho_k}{d_n\rho_n}\Big)\Big|\right)^{-1} \times |\hat \phi(\xi+\lambda)|
\nonumber\\
& \le &   \exp(8 C_0/9) |\hat \phi(\xi+\lambda)|,
\end{eqnarray}
where the last inequality follows from  \eqref{main1.general.tm.pf.eq7}, \eqref{main1.general.tm.pf.eq3}
and \eqref{main1.general.tm.pf.eq4}.

Define
$$\Lambda_L=\Big\{\sum_{k=1}^\infty \tau(R_{k}(\delta 0^\infty)) \rho_k: \ \ \delta\in \Sigma_{\mathcal D}^L\Big\}, \ L\ge 1.$$
The  sets $\Lambda_L, L\ge 1$, are well-defined and  satisfy
\begin{equation}\label{main1.general.tm.pf.eq11-}
\Lambda_1\subset \Lambda_2\subset \cdots \subset \Lambda_L\to \Lambda(\tau) \ \  {\rm as}\  \ L\to +\infty\end{equation}
by \eqref{th1.7.necessary-1}.
Combining \eqref{th1.7.necessary},  \eqref{main1.general.tm.pf.eq10}
and \eqref{main1.general.tm.pf.eq8} leads to  the existence of an absolute constant $C$ such that
\begin{equation}\label{main1.general.tm.pf.eq11}
\Big|\prod_{n=1}^L G_n\Big(\frac{\xi+\lambda}{d_n\rho_n}\Big)\Big|
\le C|\hat \phi(\xi+\lambda)|
\end{equation}
for all $\xi\in [0, 1/2)$ and $\lambda\in \Lambda_L$.

By \eqref{rhon.def}, \eqref{bndn.eq2}, \eqref{main1.general.tm.eq2} and the definition of a maximal  tree mapping,
we can prove
\begin{eqnarray}\label{main1.general.tm.pf.eq12}
& &  \sum_{\lambda\in \Lambda_L}
\Big|\prod_{n=1}^L G_n\Big(\frac{\xi+\lambda}{d_n\rho_n}\Big)\Big|^2 =
 \sum_{\delta\in \Sigma_{\mathcal D}^L}
\Big|\prod_{n=1}^L G_n\Big(\frac{\xi+\sum_{k=1}^\infty \tau(R_{k}(\delta 0^\infty)) \rho_k}{d_n\rho_n}\Big)\Big|^2
\nonumber\\
\qquad & = &  \sum_{\delta_1\in \Sigma_{d_1-1}} \cdots \sum_{\delta_L\in \Sigma_{d_L-1}}
\prod_{n=1}^L \Big| G_n\Big( \frac{\xi+\sum_{k=1}^{n-1} \tau(\delta_1\delta_2\cdots\delta_k)\rho_k +\delta_n\rho_n
}{d_n\rho_n}\Big)\Big|^2\nonumber\\
\qquad & = &  \sum_{\delta_1\in \Sigma_{d_1-1}} \cdots \sum_{\delta_{L-1}\in \Sigma_{d_{L-1}-1}}
\prod_{n=1}^{L-1} \Big| G_n\Big( \frac{\xi+\sum_{k=1}^{n-1} \tau(\delta_1\delta_2\cdots\delta_k)\rho_k +\delta_n\rho_n
}{d_n\rho_n}\Big)\Big|^2\nonumber\\
& & \quad \times
\left(\sum_{\delta_L\in \Sigma_{d_L-1}}
\Big| G_L\Big( \frac{\xi+\sum_{k=1}^{L-1} \tau(\delta_1\delta_2\cdots\delta_k)\rho_k}{d_L\rho_L} +\frac{\delta_L}{d_L}\Big)\Big|^2\right)
\nonumber\\
& = & \sum_{\delta_1\in \Sigma_{d_1-1}} \cdots \sum_{\delta_{L-1}\in \Sigma_{d_{L-1}-1}}
\prod_{n=1}^{L-1} \Big| G_n\Big( \frac{\xi+\sum_{k=1}^{n-1} \tau(\delta_1\delta_2\cdots\delta_k)\rho_k +\delta_n\rho_n
}{d_n\rho_n}\Big)\Big|^2\nonumber\\
& = & \cdots=1
\end{eqnarray}
by induction on $L\ge 1$.

By \eqref{main1.general.tm.eq4}, \eqref{main1.general.tm.pf.eq9} and \eqref{main1.general.tm.pf.eq12}, we conclude that
$$\sum_{\lambda\in \Lambda_L}|\hat \phi(\xi+\lambda)|^2\le
\sum_{\lambda\in \Lambda_L}
\Big|\prod_{n=1}^L G_n\Big(\frac{\xi+\lambda}{d_n\rho_n}\Big)\Big|^2=1.$$
Then taking limit $L\to \infty$ in the above inequality and using \eqref{main1.general.tm.pf.eq11-} yield
\begin{equation}\label{main1.general.tm.pf.eq13}
\sum_{\lambda\in \Lambda(\tau)} |\hat\phi(\xi+\lambda)|^2\le 1.
\end{equation}

Given an arbitrary $\epsilon>0$ and $L\ge 1$, there exist an integer $M\ge L$
by \eqref{bndn.eq}, \eqref{bndn.eq2}, \eqref{main1.general.tm.eq4} and  \eqref{main1.general.tm.pf.eq1-} such that
\begin{equation}\label {main1.general.tm.pf.eq14}
\Big|\prod_{n=1}^{M} G_n\Big(\frac{\xi+\lambda}{d_n\rho_n}\Big)\Big|\le (1+\epsilon) |\hat\phi(\xi+\lambda)|
\end{equation}
for all $\xi\in [0, 1/2)$ and $\lambda\in \Lambda_L$.
By \eqref{main1.general.tm.pf.eq11}, \eqref{main1.general.tm.pf.eq12} and \eqref{main1.general.tm.pf.eq14}, we obtain that
\begin{eqnarray*}
1 & = & \sum_{\lambda\in \Lambda_{M}}
\Big|\prod_{n=1}^{M} G_n\Big(\frac{\xi+\lambda}{d_n\rho_n}\Big)\Big|^2\\
& = & \left(\sum_{\lambda\in \Lambda_{L}}+ \sum_{\lambda\in \Lambda_{M}\backslash \Lambda_L}\right)
\Big|\prod_{n=1}^{M} G_n\Big(\frac{\xi+\lambda}{d_n\rho_n}\Big)\Big|^2\\
& \le & (1+\epsilon) \sum_{\lambda\in \Lambda_L} |\hat \phi(\xi+\lambda)|^2+
C \sum_{\lambda\in \Lambda_{M}\backslash \Lambda_L} |\hat \phi(\xi+\lambda)|^2\\
&\le & (1+\epsilon) \sum_{\lambda\in \Lambda_L} |\hat \phi(\xi+\lambda)|^2+ C
\sum_{\lambda\in \Lambda(\tau)\backslash \Lambda_L} |\hat \phi(\xi+\lambda)|^2.
\end{eqnarray*}
Taking limit $L\to +\infty$ and using \eqref{main1.general.tm.pf.eq11-} and \eqref{main1.general.tm.pf.eq13}, we have that
\begin{equation*}
1\le (1+\epsilon) \sum_{\lambda\in \Lambda(\tau)} |\hat \phi(\xi+\lambda)|^2\le 1+\epsilon.
\end{equation*}
This completes the proof of the desired equation \eqref{main1.general.tm.eq5}
as $\epsilon>0$ is  chosen arbitrarily.
\end{proof}

We remark that trigonometric polynomials satisfying  \eqref{main1.general.tm.eq2},
known as {\it multi-channel quadrature mirror filters}, are important for the construction of multiband
orthonormal wavelets \cite{BDS, Vai}. The requirement
\eqref{main1.general.tm.eq3} could be thought as a  weak version of Mallat's condition for
a  scaling function to have orthonormal shifts, cf.
\cite{CohenS, Daubechies, Mallat}.

Trigonometric polynomials $G_n(\xi), n\ge 1$, in Theorem \ref{main1.general.tm}
have factors  $H_{d_n}(\xi)=\sum_{j=0}^{d_n-1} e^{-2\pi i j \xi }/d_n$ by
\eqref{main1.general.tm.eq2}.
If  $G_n(\xi), n\ge 1$, are further  assumed to have factors $(H_{d_n}(\xi))^N$ for some $N\ge 2$ \cite{BDS, Vai},
then one may establish the conclusion in Theorem \ref{main1.general.tm}
 with the requirement \eqref{th1.7.necessary}
 replaced by the following weaker  assumption:
$$\sup_{n\ge 1} \sup_{\delta\in \Sigma_{\mathcal D}^n}
\sum_{j=1}^\infty \left( \frac{|\tau(R_{n+j}(\delta 0^\infty))|}{b_{n+j}}\right)^{2N}<\infty.$$

\begin{appendix}
\bigskip
\section {\bf Homogeneous Cantor sets}

In this appendix, we evaluate Hausdorff dimensions of homogeneous Cantor sets
and prove Proposition \ref{rieszdimension.prop}.

Given sequences $\mathcal{D}:=\{d_n\}_{n=1}^\infty$ and   $\mathcal{R}:=\{r_n\}_{n=1}^\infty$ of positive  numbers
that satisfy
$ 2\le d_n\in \Z$ and $r_n d_n\le 1$ for all $n\ge 1$,
define the {\em homogeneous Cantor set}  $\mathcal{E}(\mathcal{R}, \mathcal{D})$ by
\begin{equation}\label{HCantor.new} \mathcal{E}(\mathcal{R}, \mathcal{D}):=\cap_{n=0}^\infty\cup_{\delta\in\Sigma_{\mathcal{D}}^n} J_\delta,
\end{equation}
 where  $\{J_\delta: \delta\in\Sigma_{\mathcal{D}}^\ast\}$ is the family of closed intervals contained in $J_\vartheta:=[0,1]$  such that
for each $\delta \in \Sigma_{\mathcal D}^n, n\ge 0$, subintervals
$J_{\delta k}, k\in \Sigma_{d_{n+1}}$,
 of $J_{\delta}$ satisfy the following:
(i)\  $J_{\delta k}$  has same length $r_{n+1} |J_{\delta}|$ for every $k\in \Sigma_{d_{n+1}}$;
(ii)\ the gaps between $J_{\delta k}$ and $J_{\delta (k+1)}$ have same length for all $0\le k<d_{n+1}-2$; and (iii)\
 the left endpoint of $J_{\delta 0}$ is the same as the left endpoint of $J_{\delta}$, and the right
 endpoint of $J_{\delta (d_{n+1}-1)}$ is the same as the right endpoint of $J_{\delta}$
\cite{Fal, FWW, PS}.
The above homogeneous Cantor set $\mathcal{E}(\mathcal{R}, \mathcal{D})$
has its Hausdorff dimension
\begin{equation}\label{Hausdorff-D}
\dim_H (\mathcal{E}(\mathcal{R},\mathcal{D}))= \liminf_{n\rightarrow\infty} \frac {\sum_{j=1}^n\ln{q_j}}{\sum_{j=1}^n \ln{1/r_j}},
\end{equation}
see for instance \cite{FWW}.

The set $C(\mathcal{B},\mathcal{D})$ in \eqref{H-Cantor} can be obtained from rescaling
the homogeneous Cantor set $ \mathcal{E}(\mathcal{R}, \mathcal{D})$  in
\eqref{HCantor.new}. In particular,
\begin{equation}
C(\mathcal{B},\mathcal{D})= \Big (\sum_{n=1}^\infty\frac {d_n-1}{d_n\rho_{n}}\Big)\ \mathcal{E}(\mathcal{R},\mathcal{D})\end{equation}
 with  $\mathcal{R}=\{r_n\}_{n=1}^\infty$ given by
$$
r_n=\frac{\sum_{j=n+1}^\infty (d_j-1)/(d_j \rho_{j})} {\sum_{j=n}^\infty (d_j-1)/(d_j \rho_{j})}, \ n\ge 1.
$$
For sequences ${\mathcal B}$ and ${\mathcal D}$ of positive integers satisfying
\eqref{dnlimit} and
\eqref{measuredimension},
$$ \Big| \rho_n \sum_{j=n}^\infty \frac{d_j-1}{d_j \rho_{j}}-1\Big|\le
\frac{1}{d_n}+\sum_{j=n+1}^\infty \left (\max_{i\ge n} \frac{1}{b_i}\right)^{j-n}\to 0\ \ {\rm as} \ \  n\to \infty,
$$
 which implies that
$\lim_{n\to \infty}r_nb_n=1$.
 Combining the above limit with \eqref{measuredimension} and \eqref{Hausdorff-D} leads to
 \begin{equation}\label{dimensionhcs}
\dim_H (C(\mathcal{B},\mathcal{D}))= \lim_{n\to \infty} \frac {\sum_{j=1}^n\ln{d_j}}{\sum_{j=1}^n\ln{1/r_j}}=\lim_{n\rightarrow\infty} \frac {\ln{d_n}}{\ln{ b_n}}=\alpha.
\end{equation}
Recall that
the Riesz product  measure
$\mu_{{\mathcal B}, {\mathcal D}}$  in \eqref{Rieszproduct.def} is  the  natural measure  on
$C(\mathcal{B},\mathcal{D})$.
Then
the Hausdorff dimension of the Riesz product measure $\mu_{{\mathcal B}, {\mathcal D}}$
in Proposition \ref{rieszdimension.prop}
follows from \eqref{dimensionhcs}.
\end{appendix}


\end{document}